\DeclareMathOperator*{\CV}{CV}
\DeclareMathOperator*{\supp}{supp}
\numberwithin{equation}{section}
\numberwithin{table}{section}
\numberwithin{figure}{section}
\newtheorem{theorem}{Theorem}[section]
\newtheorem{definition}[theorem]{Definition}
\newtheorem{lemma}[theorem]{Lemma}
\newtheorem{remark}[theorem]{Remark}
\title{Concentration Inequalities for Cross-validation in Scattered Data Approximation}
\author{Felix~Bartel \and Ralf~Hielscher}
\date{}
\begin{document} 

\maketitle
\begin{abstract}
Choosing models from a hypothesis space is a frequent task in approximation
theory and inverse problems.
Cross-validation is a classical tool in the learner's repertoire to compare the goodness of fit for different reconstruction models.
Much work has been dedicated to computing this quantity in a fast manner but tackling its theoretical properties occurs to be difficult.
So far, most optimality results are stated in an asymptotic fashion.
In this paper we propose a concentration inequality on the difference of cross-validation score and the risk functional with respect to the squared error.
This gives a pre-asymptotic bound which holds with high probability.
For the assumptions we rely on bounds on the uniform error of the model which allow for a broadly applicable framework.

We support our claims by applying this machinery to Shepard's model, where we are able to determine precise constants of the concentration inequality.
Numerical experiments in combination with fast algorithms indicate the applicability of our results.

\vspace{0.4cm}
\noindent
\textit{Key words.}
cross-validation, scattered data approximation, model selection, parameter choice, concentration inequalities
\end{abstract}

\section{Introduction} 

The general problem in scattered data approximation is the reconstruction of a function $f\colon\Omega\to Y$ based on discrete samples $\bm z = (z_i)_{i=1}^n = (\bm x_i, f(\bm x_i))_{i=1}^n \in (\Omega\times Y)^n$.
The nodes $\bm x_i$ are independent and identically distributed according to $\rho$ on $\Omega$.
Extensive work has been done to develop reconstruction algorithms $R_h\colon (\Omega \times Y)^n \to Y^\Omega$ which propose candidates for the approximation.
Here, $h$ resembles one of the various methods with possible parameters.
Using multiple reconstruction algorithms $R_h$, $h\in H$ we end up with a hypothesis space $\{R_h(\bm z) : h\in H\} \subset Y^\Omega$.
Even given a precise application, it remains difficult to choose
reconstruction algorithms $R_h$, $h\in H$ which yields the best reconstruction $R_h(\bm z)$ of $f$.

In order to find an optimal $R_h(\bm z)$, $h\in H$, we would like to rank the reconstructions with respect to their goodness of fit.
This is quantified by the \emph{risk functional}.
In this paper we consider the risk functional with respect to the squared loss
\begin{align}\label{eq:l2err}
  \mathcal E(R_h (\bm z))
  = \int_{\Omega} \left|(R_h (\bm z))(\bm x)-f(\bm x)\right|^2\;\mathrm d\rho(\bm x).
\end{align}
Even though this is theoretically appealing we would need to know the underlying distribution $\rho$ and the function $f$ to compute this quantity.
Since this is not the case, we seek for an alternative which only relies on the given data.
The concept which struck our attention is called \emph{cross-validation}, was initially introduced in \cite{GoHeWa79}, and has been widely used since then, cf.\,\cite{TaWe96, BS02, MS00, Rosset09, DPR10}.
The basic idea consists of subdividing the data into a training set and a validation set for estimating the error.
Doing this multiple times we obtain a reasonable estimator for the risk functional.
A special case is where the partitionings seclude single nodes, then the training sets become $\bm z_{-i} \coloneqq (z_1, \dots, z_{i-1}, z_{i+1}, \dots, z_n)$ and the validation sets $\{z_i\}$.
This leads to the so called \emph{leave-one-out cross-validation score}
\begin{align}\label{eq:cv}
  \CV(\bm z, h)
  = \frac 1n \sum_{i=1}^n \left|(R_h (\bm z_{-i})) (\bm x_i) - f(\bm x_i) \right|^2.
\end{align}

An immediate drawback is given by the numerical complexity of computing the $n$ approximations $R_h(\bm z_{-i})$.
However, this is circumvented in many cases with ideas including Monte Carlo approximations \cite{DeGi91}, matrix decomposition methods \cite{Wei07,RoWiBu08}, Krylow space methods \cite{LuHoAn10}, or Fourier analysis \cite{BaHiPo19}.

One is interested in a theoretical foundation of the cross-validation score.
By the Bakushinski\u{\i} veto, cf.\,\cite{Bak84}, we know that there exists a realization of the samples, such that purely data-driven regularization methods have no guarantee for a good approximation without incorporating further information.
One still has propositions about the goodness of the cross-validation score in asymptotic cases, cf.\,\cite{Li86, GKKH02, Lukas06, Gu13}, on average, cf.\,\cite{GoHeWa79, BR08, Becker11}, or by restriction of noise, cf.\,\cite{KN08, KPP18}.

In this paper we bound the difference of cross-validation and risk pre-asymptotically, which supports the choice of cross-validation for model selection.
To circumvent the Bakushinski\u{\i} veto our results will hold with high probability as it is common in learning theory.
We use mild assumptions on the uniform error of the reconstruction algorithm, which allow for a broadly applicable framework.
These bounds improve on the results from \cite[Chapter 8]{GKKH02} in a more general setting.
Other pre-asymptotic results can be found in \cite{KKV11, KLVV13}, where the algorithmic stability, a variance-like concept, of the cross-validation score is examined.

As for the structure of this paper, in Section~\ref{sec:mcdiarmid} we repeat on an extension of McDiarmid's concentration inequality, as it will be of importance later on.
The main part is Section~\ref{sec:framework}, where we present our general framework.
Therefore, we prove in Theorems~\ref{theorem:concentratel2}
and \ref{theorem:concentratecv} concentration inequalities for the risk
functional \eqref{eq:l2err} and the cross-validation score \eqref{eq:cv} with respect to the data $\bm z$.
These concentration inequalities are used to surround the expected values of
the risk functional $\mathcal E(R_h(\cdot))$ and the cross-validation score $\CV(\cdot, h)$ by narrow intervals in which nearly all realizations of these quantities lie.
In Lemma \ref{lemma:connection} we show that the expected values of
$\mathcal E(R_h(\cdot))$ and $\CV(\cdot, h)$ coincide. Eventually, this leads us to our main result in
Theorem~\ref{theorem:connect}, which bounds the difference of risk functional and cross-validation score with high probability and, therefore, justifies the usage of cross-validation for choosing models and parameters.
To exemplify the applicability of our results and reason for the stated conditions to make sense we apply the framework to Shepard's model in Section~\ref{sec:shepard}.
As before, we bound the difference of cross-validation score and risk with high probability, now with precise constants in Theorem~\ref{theorem:shepard}.
We confirm our results with numerical experiments.

 \section{McDiarmid's concentration inequality}\label{sec:mcdiarmid} 

Since it will be of fundamental importance, we dedicate this section to an extension of McDiarmid's concentration inequality.
We consider random variables $\bm X = (X_1, \dots, X_n)$ on a probability space $(\Omega^n, \mathcal A, \mathds P)$.
As usual we denote with
\begin{align*}
  \mathds P\{A|B\} = \frac{\mathds P\{A\cap B\}}{\mathds P\{B\}}
  \quad\text{and}\quad
  \mathds E\{X|B\} = \frac{\mathds E\{\mathds 1_B X\}}{\mathds P\{B\}}
\end{align*}
the conditional probability and expected value, respectively.
To state McDiarmid's theorem we need the following concept.
\begin{definition}
  A function $f\colon\Omega^n\to\mathds R$ is said to be \emph{$\bm c$-bounded} on $\Xi\subset\Omega^n$ for $\bm c = (c_1,\dots,c_n) \in[0,\infty)^n$ if and only if
  \begin{align*}
    |f(\bm x)-f(\bm x')|
    \le d_{\bm c}(\bm x, \bm x')
  \end{align*}
  for all $\bm x = (x_1, \dots, x_n)$ and $\bm x' = (x_1',\dots,x_n') \in\Xi$ where the distance $d_{\bm c}$ is defined by
  \begin{align*}
    d_{\bm c}(\bm x, \bm x') = \sum_{i : x_i \neq x_i'} c_i.
  \end{align*}
\end{definition}

Note, that a function is $\bm c$-bounded if changing a single variable
$x_{i}$, $1\le i\le n$
changes $f(\bm x)$ only by $c_{i}$, i.e.,
\begin{align*}
  \left|f(x_1,\dots,x_n)-f(x_1,\dots, x_{i-1}, x_i', x_{i+1}, \dots ,x_n)\right|
  \le c_i
\end{align*}
for all $(x_1, \dots, x_n)$, $(x_1',\dots,x_n') \in\Xi$.

McDiarmid's inequality, cf.\,\cite{McDiarmid89}, is a generalization of Hoeffding's inequality.
We will not state the original theorem, but an extension from \cite{Combes15}.

\begin{theorem}\label{theorem:combes}
  Let $\bm X = (X_1, \dots, X_n)$ be a vector of independent random variables taking values in a set $\Omega$.
  Furthermore, let $f\colon\Omega^n \to \mathds R$ be $\bm c$-bounded on $\Xi\subset\Omega^n$,
  $ m = \mathds E\left\{f(\bm X) | \bm X\in\Xi\right\}$
  be the expected value of $f$ restricted to $\Xi$, and
  $\gamma = 1-\mathds P\{\bm X\in\Xi\}$
  the probability of $\bm X$ not being in $\Xi$.

  Then we have for $\varepsilon > \gamma\|\bm c\|_1$ the concentration of $f(\bm X)$ around its expected value
  \begin{align*}
    &\mathds P\left\{
      \left|f(\bm X) - m \right|
      > \varepsilon
    \right\}
    \le
    2 \gamma + 2\exp\left(-\frac{2(\varepsilon-\gamma\|\bm c\|_1)^2}{\|\bm c\|_2^2}\right).
  \end{align*}
\end{theorem}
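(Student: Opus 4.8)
The plan is to reduce Theorem~\ref{theorem:combes} to the classical McDiarmid inequality applied on the restricted set $\Xi$, and then to pay for the restriction with the $\gamma$-terms. Concretely, I would split the event $\{|f(\bm X)-m|>\varepsilon\}$ according to whether $\bm X\in\Xi$ or not:
\begin{align*}
  \mathds P\{|f(\bm X)-m|>\varepsilon\}
  \le \mathds P\{\bm X\notin\Xi\} + \mathds P\{|f(\bm X)-m|>\varepsilon,\ \bm X\in\Xi\}
  = \gamma + \mathds P\{|f(\bm X)-m|>\varepsilon \mid \bm X\in\Xi\}\,(1-\gamma).
\end{align*}
So it suffices to bound the conditional probability $\mathds P\{|f(\bm X)-m|>\varepsilon \mid \bm X\in\Xi\}$ by $\gamma + 2\exp(-2(\varepsilon-\gamma\|\bm c\|_1)^2/\|\bm c\|_2^2)$, since $(1-\gamma)\le 1$ and the leftover $\gamma$ from the split together with this produce the claimed $2\gamma$.

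The core step is to control the conditional distribution of $f(\bm X)$ given $\bm X\in\Xi$. The obstacle is that conditioning on $\Xi$ destroys the product structure of the law of $\bm X$, so one cannot apply McDiarmid directly to the conditioned variables. The standard remedy — and the one I expect the authors to use — is a \emph{coupling/truncation} argument: construct a function $\tilde f$ that agrees with $f$ on $\Xi$, is globally $\bm c$-bounded on all of $\Omega^n$, and whose global expectation $\tilde m = \mathds E\{\tilde f(\bm X)\}$ is close to $m$. A natural candidate is to define $\tilde f(\bm x)$ as the value of $f$ at the point of $\Xi$ nearest to $\bm x$ in the metric $d_{\bm c}$ (a Lipschitz extension), which is automatically $\bm c$-bounded globally and coincides with $f$ on $\Xi$. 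Then classical McDiarmid gives
\begin{align*}
  \mathds P\{|\tilde f(\bm X)-\tilde m|>t\} \le 2\exp\Bigl(-\frac{2t^2}{\|\bm c\|_2^2}\Bigr).
\end{align*}

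It remains to transfer this to $f$ on $\Xi$ and to relate $\tilde m$ to $m$. For the mean shift, since $\tilde f=f$ on $\Xi$ and $d_{\bm c}(\bm x,\bm x')\le\|\bm c\|_1$ always, one gets $|\tilde f(\bm x)-\tilde f(\bm x')|\le\|\bm c\|_1$, whence
\begin{align*}
  |\tilde m - m|
  = \bigl|\mathds E\{\tilde f(\bm X)\} - \mathds E\{\tilde f(\bm X)\mid \bm X\in\Xi\}\bigr|
  \le \gamma\,\|\bm c\|_1,
\end{align*}
because replacing the conditional law by the unconditional one only reweights mass of total variation at most $\gamma$, each unit of which moves the value of $\tilde f$ by at most $\|\bm c\|_1$. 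On the event $\bm X\in\Xi$ we have $f(\bm X)=\tilde f(\bm X)$, so $|f(\bm X)-m|\le|\tilde f(\bm X)-\tilde m|+\gamma\|\bm c\|_1$; therefore $\{|f(\bm X)-m|>\varepsilon\}\cap\{\bm X\in\Xi\}\subset\{|\tilde f(\bm X)-\tilde m|>\varepsilon-\gamma\|\bm c\|_1\}$, which is where the hypothesis $\varepsilon>\gamma\|\bm c\|_1$ is needed to keep the exponent meaningful. Combining this with the McDiarmid bound for $\tilde f$ at $t=\varepsilon-\gamma\|\bm c\|_1$ and feeding it back into the split above yields
\begin{align*}
  \mathds P\{|f(\bm X)-m|>\varepsilon\}
  \le \gamma + 2\exp\Bigl(-\frac{2(\varepsilon-\gamma\|\bm c\|_1)^2}{\|\bm c\|_2^2}\Bigr),
\end{align*}
and absorbing an extra $\gamma$ (crudely bounding the restricted probability by the unrestricted one plus $\gamma$) gives the stated $2\gamma$. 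The main technical care is in justifying the Lipschitz extension $\tilde f$ and the total-variation estimate $|\tilde m-m|\le\gamma\|\bm c\|_1$; everything else is bookkeeping around the event split.
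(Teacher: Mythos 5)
The paper itself offers no proof of Theorem~\ref{theorem:combes}: the statement is imported directly from \cite{Combes15}, so there is no in-paper argument to compare against. Judged on its own merits, your strategy is the standard one for this kind of extension and is essentially sound: splitting on the event $\{\bm X\in\Xi\}$, applying classical McDiarmid to a globally $\bm c$-bounded extension $\tilde f$ of $f$, bounding the mean shift by $|\tilde m-m|\le\gamma\|\bm c\|_1$ (which only needs that the oscillation of $\tilde f$ is at most the $d_{\bm c}$-diameter $\|\bm c\|_1$ of $\Omega^n$), and using the inclusion $\{|f(\bm X)-m|>\varepsilon\}\cap\{\bm X\in\Xi\}\subset\{|\tilde f(\bm X)-\tilde m|>\varepsilon-\gamma\|\bm c\|_1\}$. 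Assembled correctly this even yields the slightly sharper bound $\gamma+2\exp\bigl(-2(\varepsilon-\gamma\|\bm c\|_1)^2/\|\bm c\|_2^2\bigr)$, of which the stated $2\gamma$ version is a trivial consequence.

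The one step that does not work as written is your construction of $\tilde f$: defining it as ``the value of $f$ at the point of $\Xi$ nearest to $\bm x$'' is not automatically $\bm c$-bounded. A nearest point need not exist or be unique, and composing $f$ with a nearest-point projection can inflate the Lipschitz constant, since two points that are $d_{\bm c}$-close may project to far-apart points of $\Xi$ whose $f$-values differ by up to $\|\bm c\|_1$. The fix is the McShane--Whitney extension $\tilde f(\bm x)=\inf_{\bm y\in\Xi}\bigl(f(\bm y)+d_{\bm c}(\bm x,\bm y)\bigr)$: because $d_{\bm c}$ satisfies the triangle inequality, this $\tilde f$ is $1$-Lipschitz with respect to $d_{\bm c}$, hence $\bm c$-bounded on all of $\Omega^n$, it coincides with $f$ on $\Xi$, and it is finite since $f$ has oscillation at most $\|\bm c\|_1$ on $\Xi$; measurability of the infimum is the only remaining (standard) technicality. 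With that substitution, and discarding the superfluous detour through the conditional probability with the $(1-\gamma)$ factor, your argument is complete.
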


 \section{General framework}\label{sec:framework}

Throughout this section we consider an arbitrary domain $\Omega$ equipped with
some probability measure $\rho$ and a function $f \colon \Omega \to Y$ which
we want to approximate from a finite sampling $\bm{z} =
(x_{i},f(x_{i}))_{i=1}^{n}$. We consider the sampling $\bm{z} \in (\Omega\times Y)^n$ as a
realization of the random variable $\bm Z = (\bm X_{i}, f(\bm X_{i}))_{i=1}^{N}$
with $\bm X_{i}$ being independently and identically $\rho$-distributed random
variables with values in $\Omega$.
This includes the generality of data-driven approximation methods.

The goal of this section is to relate, for an arbitrary approximation operator
$R_{h} \colon (\Omega\times Y)^n \to Y^{\Omega}$, the risk functional
\eqref{eq:l2err} and the cross-validation score \eqref{eq:cv}. This is done in
three steps: First we prove concentration inequalities for the risk functional
and cross-validation score in Theorem~\ref{theorem:concentratel2} and
\ref{theorem:concentratecv}, respectively.  For every reconstruction algorithm
$R_h$, this restricts their values to an interval around their expected values
with high probability as depicted in Figure~\ref{fig:intuition} (a) and (b).
In Lemma~\ref{lemma:connection} we state the connection of these two expected
values.  These three facts allow us to overlap the two concentrations,
cf.\,Figure~\ref{fig:intuition} (c), and lead to Theorem~\ref{theorem:connect}
which is a concentration inequality for the difference of risk functional and
cross-validation score.

\begin{figure}
  \centering
  \begin{subfigure}{0.32\linewidth}
    \centering
    \begingroup
  \makeatletter
  \providecommand\color[2][]{\GenericError{(gnuplot) \space\space\space\@spaces}{Package color not loaded in conjunction with
      terminal option `colourtext'}{See the gnuplot documentation for explanation.}{Either use 'blacktext' in gnuplot or load the package
      color.sty in LaTeX.}\renewcommand\color[2][]{}}\providecommand\includegraphics[2][]{\GenericError{(gnuplot) \space\space\space\@spaces}{Package graphicx or graphics not loaded}{See the gnuplot documentation for explanation.}{The gnuplot epslatex terminal needs graphicx.sty or graphics.sty.}\renewcommand\includegraphics[2][]{}}\providecommand\rotatebox[2]{#2}\@ifundefined{ifGPcolor}{\newif\ifGPcolor
    \GPcolortrue
  }{}\@ifundefined{ifGPblacktext}{\newif\ifGPblacktext
    \GPblacktexttrue
  }{}\let\gplgaddtomacro\g@addto@macro
\gdef\gplbacktext{}\gdef\gplfronttext{}\makeatother
  \ifGPblacktext
\def\colorrgb#1{}\def\colorgray#1{}\else
\ifGPcolor
      \def\colorrgb#1{\color[rgb]{#1}}\def\colorgray#1{\color[gray]{#1}}\expandafter\def\csname LTw\endcsname{\color{white}}\expandafter\def\csname LTb\endcsname{\color{black}}\expandafter\def\csname LTa\endcsname{\color{black}}\expandafter\def\csname LT0\endcsname{\color[rgb]{1,0,0}}\expandafter\def\csname LT1\endcsname{\color[rgb]{0,1,0}}\expandafter\def\csname LT2\endcsname{\color[rgb]{0,0,1}}\expandafter\def\csname LT3\endcsname{\color[rgb]{1,0,1}}\expandafter\def\csname LT4\endcsname{\color[rgb]{0,1,1}}\expandafter\def\csname LT5\endcsname{\color[rgb]{1,1,0}}\expandafter\def\csname LT6\endcsname{\color[rgb]{0,0,0}}\expandafter\def\csname LT7\endcsname{\color[rgb]{1,0.3,0}}\expandafter\def\csname LT8\endcsname{\color[rgb]{0.5,0.5,0.5}}\else
\def\colorrgb#1{\color{black}}\def\colorgray#1{\color[gray]{#1}}\expandafter\def\csname LTw\endcsname{\color{white}}\expandafter\def\csname LTb\endcsname{\color{black}}\expandafter\def\csname LTa\endcsname{\color{black}}\expandafter\def\csname LT0\endcsname{\color{black}}\expandafter\def\csname LT1\endcsname{\color{black}}\expandafter\def\csname LT2\endcsname{\color{black}}\expandafter\def\csname LT3\endcsname{\color{black}}\expandafter\def\csname LT4\endcsname{\color{black}}\expandafter\def\csname LT5\endcsname{\color{black}}\expandafter\def\csname LT6\endcsname{\color{black}}\expandafter\def\csname LT7\endcsname{\color{black}}\expandafter\def\csname LT8\endcsname{\color{black}}\fi
  \fi
    \setlength{\unitlength}{0.0500bp}\ifx\gptboxheight\undefined \newlength{\gptboxheight}\newlength{\gptboxwidth}\newsavebox{\gptboxtext}\fi \setlength{\fboxrule}{0.5pt}\setlength{\fboxsep}{1pt}\begin{picture}(2540.00,1460.00)\gplgaddtomacro\gplbacktext{}\gplgaddtomacro\gplfronttext{}\gplbacktext
    \put(0,0){\includegraphics[width={127.00bp},height={73.00bp}]{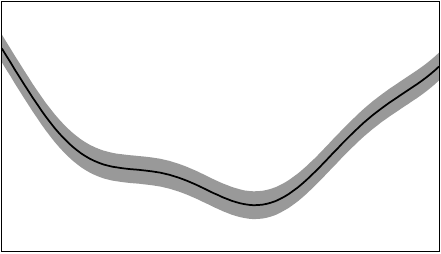}}\gplfronttext
  \end{picture}\endgroup
     \caption{Concentration of the risk functional.}
  \end{subfigure}
  \begin{subfigure}{0.32\linewidth}
    \centering
    \begingroup
  \makeatletter
  \providecommand\color[2][]{\GenericError{(gnuplot) \space\space\space\@spaces}{Package color not loaded in conjunction with
      terminal option `colourtext'}{See the gnuplot documentation for explanation.}{Either use 'blacktext' in gnuplot or load the package
      color.sty in LaTeX.}\renewcommand\color[2][]{}}\providecommand\includegraphics[2][]{\GenericError{(gnuplot) \space\space\space\@spaces}{Package graphicx or graphics not loaded}{See the gnuplot documentation for explanation.}{The gnuplot epslatex terminal needs graphicx.sty or graphics.sty.}\renewcommand\includegraphics[2][]{}}\providecommand\rotatebox[2]{#2}\@ifundefined{ifGPcolor}{\newif\ifGPcolor
    \GPcolortrue
  }{}\@ifundefined{ifGPblacktext}{\newif\ifGPblacktext
    \GPblacktexttrue
  }{}\let\gplgaddtomacro\g@addto@macro
\gdef\gplbacktext{}\gdef\gplfronttext{}\makeatother
  \ifGPblacktext
\def\colorrgb#1{}\def\colorgray#1{}\else
\ifGPcolor
      \def\colorrgb#1{\color[rgb]{#1}}\def\colorgray#1{\color[gray]{#1}}\expandafter\def\csname LTw\endcsname{\color{white}}\expandafter\def\csname LTb\endcsname{\color{black}}\expandafter\def\csname LTa\endcsname{\color{black}}\expandafter\def\csname LT0\endcsname{\color[rgb]{1,0,0}}\expandafter\def\csname LT1\endcsname{\color[rgb]{0,1,0}}\expandafter\def\csname LT2\endcsname{\color[rgb]{0,0,1}}\expandafter\def\csname LT3\endcsname{\color[rgb]{1,0,1}}\expandafter\def\csname LT4\endcsname{\color[rgb]{0,1,1}}\expandafter\def\csname LT5\endcsname{\color[rgb]{1,1,0}}\expandafter\def\csname LT6\endcsname{\color[rgb]{0,0,0}}\expandafter\def\csname LT7\endcsname{\color[rgb]{1,0.3,0}}\expandafter\def\csname LT8\endcsname{\color[rgb]{0.5,0.5,0.5}}\else
\def\colorrgb#1{\color{black}}\def\colorgray#1{\color[gray]{#1}}\expandafter\def\csname LTw\endcsname{\color{white}}\expandafter\def\csname LTb\endcsname{\color{black}}\expandafter\def\csname LTa\endcsname{\color{black}}\expandafter\def\csname LT0\endcsname{\color{black}}\expandafter\def\csname LT1\endcsname{\color{black}}\expandafter\def\csname LT2\endcsname{\color{black}}\expandafter\def\csname LT3\endcsname{\color{black}}\expandafter\def\csname LT4\endcsname{\color{black}}\expandafter\def\csname LT5\endcsname{\color{black}}\expandafter\def\csname LT6\endcsname{\color{black}}\expandafter\def\csname LT7\endcsname{\color{black}}\expandafter\def\csname LT8\endcsname{\color{black}}\fi
  \fi
    \setlength{\unitlength}{0.0500bp}\ifx\gptboxheight\undefined \newlength{\gptboxheight}\newlength{\gptboxwidth}\newsavebox{\gptboxtext}\fi \setlength{\fboxrule}{0.5pt}\setlength{\fboxsep}{1pt}\begin{picture}(2540.00,1460.00)\gplgaddtomacro\gplbacktext{}\gplgaddtomacro\gplfronttext{}\gplbacktext
    \put(0,0){\includegraphics[width={127.00bp},height={73.00bp}]{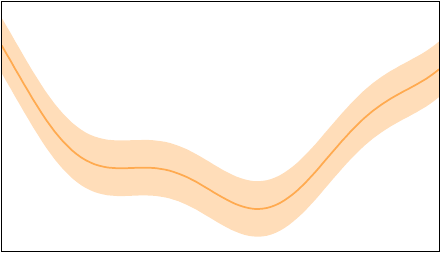}}\gplfronttext
  \end{picture}\endgroup
     \caption{Concentration of the cross-validation score.}
  \end{subfigure}
  \begin{subfigure}{0.32\linewidth}
    \centering
    \begingroup
  \makeatletter
  \providecommand\color[2][]{\GenericError{(gnuplot) \space\space\space\@spaces}{Package color not loaded in conjunction with
      terminal option `colourtext'}{See the gnuplot documentation for explanation.}{Either use 'blacktext' in gnuplot or load the package
      color.sty in LaTeX.}\renewcommand\color[2][]{}}\providecommand\includegraphics[2][]{\GenericError{(gnuplot) \space\space\space\@spaces}{Package graphicx or graphics not loaded}{See the gnuplot documentation for explanation.}{The gnuplot epslatex terminal needs graphicx.sty or graphics.sty.}\renewcommand\includegraphics[2][]{}}\providecommand\rotatebox[2]{#2}\@ifundefined{ifGPcolor}{\newif\ifGPcolor
    \GPcolortrue
  }{}\@ifundefined{ifGPblacktext}{\newif\ifGPblacktext
    \GPblacktexttrue
  }{}\let\gplgaddtomacro\g@addto@macro
\gdef\gplbacktext{}\gdef\gplfronttext{}\makeatother
  \ifGPblacktext
\def\colorrgb#1{}\def\colorgray#1{}\else
\ifGPcolor
      \def\colorrgb#1{\color[rgb]{#1}}\def\colorgray#1{\color[gray]{#1}}\expandafter\def\csname LTw\endcsname{\color{white}}\expandafter\def\csname LTb\endcsname{\color{black}}\expandafter\def\csname LTa\endcsname{\color{black}}\expandafter\def\csname LT0\endcsname{\color[rgb]{1,0,0}}\expandafter\def\csname LT1\endcsname{\color[rgb]{0,1,0}}\expandafter\def\csname LT2\endcsname{\color[rgb]{0,0,1}}\expandafter\def\csname LT3\endcsname{\color[rgb]{1,0,1}}\expandafter\def\csname LT4\endcsname{\color[rgb]{0,1,1}}\expandafter\def\csname LT5\endcsname{\color[rgb]{1,1,0}}\expandafter\def\csname LT6\endcsname{\color[rgb]{0,0,0}}\expandafter\def\csname LT7\endcsname{\color[rgb]{1,0.3,0}}\expandafter\def\csname LT8\endcsname{\color[rgb]{0.5,0.5,0.5}}\else
\def\colorrgb#1{\color{black}}\def\colorgray#1{\color[gray]{#1}}\expandafter\def\csname LTw\endcsname{\color{white}}\expandafter\def\csname LTb\endcsname{\color{black}}\expandafter\def\csname LTa\endcsname{\color{black}}\expandafter\def\csname LT0\endcsname{\color{black}}\expandafter\def\csname LT1\endcsname{\color{black}}\expandafter\def\csname LT2\endcsname{\color{black}}\expandafter\def\csname LT3\endcsname{\color{black}}\expandafter\def\csname LT4\endcsname{\color{black}}\expandafter\def\csname LT5\endcsname{\color{black}}\expandafter\def\csname LT6\endcsname{\color{black}}\expandafter\def\csname LT7\endcsname{\color{black}}\expandafter\def\csname LT8\endcsname{\color{black}}\fi
  \fi
    \setlength{\unitlength}{0.0500bp}\ifx\gptboxheight\undefined \newlength{\gptboxheight}\newlength{\gptboxwidth}\newsavebox{\gptboxtext}\fi \setlength{\fboxrule}{0.5pt}\setlength{\fboxsep}{1pt}\begin{picture}(2540.00,1460.00)\gplgaddtomacro\gplbacktext{}\gplgaddtomacro\gplfronttext{}\gplbacktext
    \put(0,0){\includegraphics[width={127.00bp},height={73.00bp}]{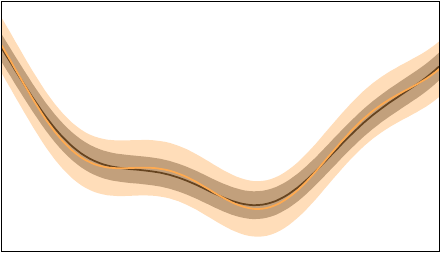}}\gplfronttext
  \end{picture}\endgroup
     \caption{Connection via their expected values. }
  \end{subfigure}
  \caption{Intuition of Theorems~\ref{theorem:concentratel2}, \ref{theorem:concentratecv}, and \ref{theorem:connect}}\label{fig:intuition}.
\end{figure}

Dealing with reconstruction algorithms
$R_h\colon (\Omega \times Y)^n \to Y^\Omega$ in scattered data approximation
settings, there may exist possible realizations $\bm z\in(\Omega\times Y)^n$
of the samples such that we cannot bound the error of the approximation in a
small manner. An example for that would be polynomial interpolation where all
nodes $\bm x_i$ coincide.  To handle these outliers we
define a subset of all samples excluding the outliers without uniform bound
on the reconstruction error.

\begin{definition}\label{definition:alice}
  For a reconstruction method $R_h$ we define a subset of all possible samples
  \begin{align*}
    \Xi
    = \Xi(h, C_1, C_2)
    = \{\bm z\in(\Omega\times Y)^n : \text{(i) and (ii) hold\,}\},
  \end{align*}
  where the two stated conditions are:
  \begin{enumerate}[(i)]
  \item
    The uniform error of the reconstruction $R_h(\bm z)$ is bounded, i.e., for $1\le i\le n$
    \begin{align*}
      \|R_h(\bm z_{-i})-f\|_\infty
      < C_1.
    \end{align*}
  \item
    Changing one node will not do much damage, i.e., for all $\bm x\in\Omega$ we assume for every $1\le i\le n$ the $C_2\mathds 1$-boundedness of $\bm z_{-i} \mapsto R_h(\bm z_{-i})(\bm x)$.
  \end{enumerate}
\end{definition}

\begin{remark}\label{remark:c1c2}
  \begin{enumerate}[(i)]
  \item
    Note that, by applying the triangle inequality, we could use $C_2 \le 2C_1$ and only rely on the first assumption.
    For that reason we will state all results in two ways: one version using only $C_1$ for simplicity and another using both constants to allow for fine-tuning of the bounds.
  \item
    For many reconstruction methods one has a bound on the uniform error in a probabilistic fashion in the form of
    \begin{align*}
      \mathds P\{\|R_h(\bm Z')-f\|_\infty > C_1\}
      \le \gamma
    \end{align*}
      for some small $\gamma$, e.g.\,\cite[Section 6.3 and 6.4]{Stein08} or one of \cite{Kunsch18, LUH19, PU21, KNS21}.
    To extend this to the context of assumption (i), we apply this bound for $\bm Z_{-i}$ and $1\le i\le n$.
    Union bound then gives
    \begin{align*}
      \mathds P\{\bm Z\notin \Xi(h, \varepsilon, 2\varepsilon)\}
      & = \mathds P\{\exists\, 1\le i\le n : \|R_h(\bm Z_{-i})-f\|_\infty > C_1\} \\
      & \le \sum_{i=1}^{n} \mathds P\{ \|R_h(\bm Z_{-i})-f\|_\infty > C_1\} \\
      & \le n\gamma.
    \end{align*}
    For instance, in reconstructing functions via least squares, it has been shown that $\gamma$ decays faster than $1/n$ and the overall probability gets small, cf.\,\cite{PU21}.
    This supports the sanity of the stated set.
  \end{enumerate}
\end{remark}

We now want to show the $\bm c$-boundedness of the risk functional on $\Xi$ in order to apply Theorem~\ref{theorem:combes} for a concentration inequality.

\begin{lemma}\label{lemma:cboundl2} Let $\Xi = \Xi(h, C_1, C_2)$ be the set of samples from Definition~\ref{definition:alice} and $\bm c = 2C_1C_2\mathds 1\in\mathds R^n$.
  Then the risk functionals $\bm z \mapsto \mathcal E(R_h(\bm z_{-i}))$ are $\bm c$-bounded.
\end{lemma}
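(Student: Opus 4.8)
The plan is to verify the defining inequality of $\bm c$-boundedness directly, bounding the integrand of $\mathcal E$ pointwise. Fix $1\le i\le n$ and take two samples $\bm z, \bm z'\in\Xi$; abbreviate $g(\bm x) = (R_h(\bm z_{-i}))(\bm x) - f(\bm x)$ and $g'(\bm x) = (R_h(\bm z'_{-i}))(\bm x) - f(\bm x)$. Since $\rho$ is a probability measure, it is enough to establish the $\bm x$-independent bound $\big||g(\bm x)|^2 - |g'(\bm x)|^2\big| \le d_{\bm c}(\bm z, \bm z')$ for every $\bm x\in\Omega$, because then
\begin{align*}
  \big|\mathcal E(R_h(\bm z_{-i})) - \mathcal E(R_h(\bm z'_{-i}))\big|
  &\le \int_\Omega \big||g(\bm x)|^2 - |g'(\bm x)|^2\big|\,\mathrm d\rho(\bm x)
  \le d_{\bm c}(\bm z, \bm z').
\end{align*}

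To obtain this pointwise bound I would factor $\big||g(\bm x)|^2 - |g'(\bm x)|^2\big| = \big(|g(\bm x)| + |g'(\bm x)|\big)\big||g(\bm x)| - |g'(\bm x)|\big|$ and estimate the two factors separately. Assumption~(i) of Definition~\ref{definition:alice}, applied to both $\bm z$ and $\bm z'$, gives $|g(\bm x)| + |g'(\bm x)| < 2C_1$. For the second factor the reverse triangle inequality yields $\big||g(\bm x)| - |g'(\bm x)|\big| \le |g(\bm x) - g'(\bm x)| = \big|(R_h(\bm z_{-i}))(\bm x) - (R_h(\bm z'_{-i}))(\bm x)\big|$, and assumption~(ii), i.e.\ the $C_2\mathds 1$-boundedness of $\bm z_{-i}\mapsto (R_h(\bm z_{-i}))(\bm x)$, bounds this by $d_{C_2\mathds 1}(\bm z_{-i}, \bm z'_{-i}) \le d_{C_2\mathds 1}(\bm z, \bm z')$, the last step because deleting the $i$-th coordinate can only reduce the number of disagreeing coordinates. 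Multiplying the two estimates gives $\big||g(\bm x)|^2 - |g'(\bm x)|^2\big| \le 2C_1\, d_{C_2\mathds 1}(\bm z, \bm z') = d_{2C_1 C_2\mathds 1}(\bm z, \bm z') = d_{\bm c}(\bm z, \bm z')$, independent of $\bm x$, and the display above then concludes the proof.

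The argument is essentially routine; the only points to watch are the passage $\big||g| - |g'|\big| \le |g-g'|$, which keeps everything valid whether $Y$ equals $\mathds R$, $\mathds C$, or a general normed space, and the bookkeeping $d_{C_2\mathds 1}(\bm z_{-i}, \bm z'_{-i}) \le d_{C_2\mathds 1}(\bm z, \bm z')$, which lets all $n$ per-coordinate constants be taken equal to $2C_1 C_2$ (note that $\mathcal E(R_h(\bm z_{-i}))$ does not depend on the $i$-th coordinate at all, so the constant $c_i = 2C_1 C_2$ there is a harmless overestimate). The strict inequality in assumption~(i) is immaterial, as it only strengthens an intermediate estimate feeding into a non-strict final bound.
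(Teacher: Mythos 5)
Your proof is correct and follows essentially the same route as the paper: factor the difference of squared errors, bound one factor by $2C_1$ via assumption (i) and the other via the $C_2\mathds 1$-boundedness of assumption (ii), then integrate against the probability measure $\rho$. The only (harmless, in fact slightly more careful) difference is that you verify the $d_{\bm c}$-bound directly for arbitrary pairs in $\Xi$ rather than only for single-coordinate changes, which sidesteps the issue that intermediate points on a coordinate-by-coordinate path between two elements of $\Xi$ need not lie in $\Xi$.
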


\begin{proof} We have to check what happens if we change one component.
  For that let $\bm z$ and $\bm z'\in\Xi$ be such that they differ in one sample.
  By the definition of the risk functional and the third binomial formula we have
  \begin{align*}
    &\left| \mathcal E(R_h(\bm z_{-i})) - \mathcal E(R_h(\bm z_{-i}')) \right| \\
    &= \left| \int_{\Omega} |R_h(\bm z_{-i})(\bm x)-f(\bm x)|^2 \;\mathrm d\rho(\bm x)
    - \int_{\Omega} |R_h(\bm z_{-i}')(\bm x)-f(\bm x)|^2 \;\mathrm d\rho(\bm x) \right| \\
    &\le \int_{\Omega}
        |R_h(\bm z_{-i}')(\bm x)-f(\bm x)+R_h(\bm z_{-i})-f(\bm x)|\cdot|R_h(\bm z_{-i}')(\bm x)-R_h(\bm z_{-i})(\bm x)|
      \;\mathrm d\rho(\bm x).
  \end{align*}
  Using property (i) and (ii) of $\Xi$ leads to
  \begin{align*}
    \left| \mathcal E(R_h(\bm z_{-i}')) - \mathcal E(R_h(\bm z_{-i})) \right|
    \le 2 C_1 C_2  \int_{\Omega}\mathrm d\rho(\bm x).
  \end{align*}
  Since $\rho$ is a probability measure the above integral evaluates to one and we obtain the desired constant of $2C_1C_2$.

  In $\mathcal E(R_h(\bm z_{-i}))$ the variable $z_i$ does not occur and, therefore, the corresponding $c_i$ is arbitrary.
  To have a general $\bm c$ for all $1\le i\le n$, we use $c_i = 2C_1C_2$ anyways and obtain the assertion.
\end{proof}

Now we state the theorem on the concentration of the risk functional.

\begin{theorem}\label{theorem:concentratel2} Let $\bm Z = (\bm X_i, f(\bm X_i))_{i=1}^n$ with $\bm X_i$ distributed independent and identically according to $\rho$ on $\Omega$.
  Further, let
  \begin{align*}
    m = \mathds E\{ \mathcal E(R_h(\bm Z_{-i})) | \bm Z\in\Xi \},
  \end{align*}
  be the expected value of the risk functionals $\mathcal E(R_h(\bm Z_{-i}))$ restricted to $\Xi=\Xi(h, C_1, C_2)$ from Definition~\ref{definition:alice}, and
  $\gamma = 1-\mathds P\{\bm Z\in\Xi\}$ the probability of $\bm Z$ not being in $\Xi$.

  Then for $\varepsilon > 2\gamma n C_1 C_2$ and $1\le i\le n$ we obtain the concentration of the risk functionals
  \begin{align*}
    \mathds P\left\{ \left|
    \mathcal E(R_h(\bm Z_{-i}))
    -m \right| > \varepsilon \right\}
    &\le 2\gamma + 2\exp\left(-\left(\frac{\varepsilon}{\sqrt{2n}C_1C_2}-\sqrt{2n}\gamma\right)^2\right)\\
    &\le 2\gamma + 2\exp\left(-\left(\frac{\varepsilon}{\sqrt{8n}C_1^2}-\sqrt{2n}\gamma\right)^2\right).
  \end{align*}
\end{theorem}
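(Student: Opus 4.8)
The plan is to apply the extended McDiarmid inequality (Theorem~\ref{theorem:combes}) directly to the function $f(\bm z) = \mathcal E(R_h(\bm z_{-i}))$, with the set $\Xi = \Xi(h,C_1,C_2)$ playing the role of $\Xi$ in that theorem. The bulk of the work has already been done: Lemma~\ref{lemma:cboundl2} establishes that this function is $\bm c$-bounded on $\Xi$ with $\bm c = 2C_1C_2\mathds 1 \in \mathds R^n$. So the structural hypotheses of Theorem~\ref{theorem:combes} are met, and what remains is purely a matter of substituting the explicit values of $\|\bm c\|_1$ and $\|\bm c\|_2$ and simplifying the resulting exponent.

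Concretely, first I would record $\|\bm c\|_1 = 2nC_1C_2$ and $\|\bm c\|_2 = 2\sqrt{n}\,C_1C_2$, so $\|\bm c\|_2^2 = 4nC_1^2C_2^2$. The condition $\varepsilon > \gamma\|\bm c\|_1$ of Theorem~\ref{theorem:combes} becomes exactly $\varepsilon > 2\gamma n C_1C_2$, which is the hypothesis in the statement. Plugging into the tail bound gives
\begin{align*}
  \mathds P\left\{ \left| \mathcal E(R_h(\bm Z_{-i})) - m \right| > \varepsilon \right\}
  \le 2\gamma + 2\exp\left( -\frac{2(\varepsilon - 2\gamma n C_1C_2)^2}{4nC_1^2C_2^2} \right).
\end{align*}
Then I would rewrite the exponent: factor $1/(2nC_1^2C_2^2)$ out and complete the square, i.e.
\begin{align*}
  \frac{2(\varepsilon - 2\gamma n C_1C_2)^2}{4nC_1^2C_2^2}
  = \frac{(\varepsilon - 2\gamma n C_1C_2)^2}{2nC_1^2C_2^2}
  = \left( \frac{\varepsilon - 2\gamma n C_1C_2}{\sqrt{2n}\,C_1C_2} \right)^2
  = \left( \frac{\varepsilon}{\sqrt{2n}\,C_1C_2} - \sqrt{2n}\,\gamma \right)^2,
\end{align*}
where in the last step I used $\frac{2\gamma n C_1C_2}{\sqrt{2n}\,C_1C_2} = \frac{2\gamma n}{\sqrt{2n}} = \sqrt{2n}\,\gamma$. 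This yields the first displayed inequality. For the second, I would invoke Remark~\ref{remark:c1c2}(i): since $C_2 \le 2C_1$, we have $C_1C_2 \le 2C_1^2$, hence $\sqrt{2n}\,C_1C_2 \le \sqrt{2n}\cdot 2C_1^2 = \sqrt{8n}\,C_1^2$, and therefore $\frac{\varepsilon}{\sqrt{2n}\,C_1C_2} \ge \frac{\varepsilon}{\sqrt{8n}\,C_1^2}$, which (since the expression inside the square is nonnegative under the hypothesis on $\varepsilon$) only makes the exponent larger and the bound weaker, giving the stated looser inequality.

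There is no real obstacle here — the theorem is essentially a corollary of Theorem~\ref{theorem:combes} and Lemma~\ref{lemma:cboundl2}. The one point requiring a word of care is the observation, already made at the end of the proof of Lemma~\ref{lemma:cboundl2}, that $z_i$ does not appear in $\mathcal E(R_h(\bm z_{-i}))$, so the $i$-th coordinate could be assigned $c_i = 0$; choosing $c_i = 2C_1C_2$ instead is legitimate (a $\bm c$-bounded function is also $\bm c'$-bounded for any $\bm c' \ge \bm c$ componentwise) and gives a single uniform $\bm c$ that works for all $i$ simultaneously, which is what we want for the statement. A sharper bound with $\|\bm c\|_1 = 2(n-1)C_1C_2$ and $\|\bm c\|_2 = 2\sqrt{n-1}\,C_1C_2$ would also be available, but I would follow the paper's choice of keeping $n$ for uniformity and simplicity of the final expression.
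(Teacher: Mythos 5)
Your proposal is correct and follows essentially the same route as the paper: the paper's proof likewise just combines Lemma~\ref{lemma:cboundl2} with Theorem~\ref{theorem:combes}, plugs in $\|\bm c\|_1 = 2nC_1C_2$ and $\|\bm c\|_2^2 = 4nC_1^2C_2^2$, and obtains the second inequality from Remark~\ref{remark:c1c2}(i). Your explicit rewriting of the exponent and the remark about the unused $i$-th coordinate only spell out details the paper leaves implicit.
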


\begin{proof} Lemma~\ref{lemma:cboundl2} in combination with Theorem~\ref{theorem:combes}
  yields for $\varepsilon > 2 \gamma n C_1 C_2$ the first inequality
  \begin{align*}
    \mathds P\left\{ \left| \mathds E_{\bm Z'}\left\{ \mathcal E(R_h(\bm Z')) \right\}
    - \mathcal E(R_h(\bm Z)) \right| > \varepsilon \right\}
    \le 2\gamma + 2\exp\left(-\frac{2(\varepsilon-2\gamma n C_1C_2)^2}{4 n C_1^2 C_2^2}\right).
  \end{align*}
  The second inequality is due to Remark~\ref{remark:c1c2} (i).
\end{proof} 

Next, we tackle the related problem with respect to the cross-validation score.
First we take care of its $\bm c$-boundedness on $\Xi$.

\begin{lemma}\label{lemma:cboundcv} Let $\Xi = \Xi(h, C_1, C_2)$ be the set of samples from Definition~\ref{definition:alice} and $\bm c = C_1(C_1/n+2C_2)\mathds 1\in\mathds R^n$.
  Then the cross-validation score $\bm z \mapsto \CV(\bm z, h)$ is $\bm c$-bounded.
\end{lemma}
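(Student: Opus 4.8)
The plan is to mimic the proof of Lemma~\ref{lemma:cboundl2}, but now accounting for the fact that the variable $z_i = (\bm x_i, f(\bm x_i))$ genuinely appears in $\CV(\bm z, h)$ in two distinct ways: it is a node of all training sets $\bm z_{-j}$ with $j \neq i$, and it is itself the validation point in the $i$-th summand. I would fix an index $k$, take two samples $\bm z, \bm z' \in \Xi$ differing only in the $k$-th component, and estimate $|\CV(\bm z, h) - \CV(\bm z', h)|$ by splitting the sum $\frac1n\sum_{i=1}^n$ into the single term $i = k$ and the remaining $n-1$ terms $i \neq k$.

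For the terms with $i \neq k$, the validation point $\bm x_i$ and value $f(\bm x_i)$ are unchanged, so each summand changes only through $R_h(\bm z_{-i})(\bm x_i)$ versus $R_h(\bm z_{-i}')(\bm x_i)$; since $\bm z_{-i}$ and $\bm z_{-i}'$ differ in exactly one node, condition (ii) of $\Xi$ gives a pointwise change of at most $C_2$ in the reconstruction, and combining this with condition (i) via the third binomial formula (exactly as in Lemma~\ref{lemma:cboundl2}) bounds the change of each such summand by $2C_1C_2$. There are $n-1$ of them, each scaled by $1/n$, contributing at most $\frac{n-1}{n}\cdot 2C_1C_2 \le 2C_1C_2$. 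For the single term $i = k$, both the reconstruction $R_h(\bm z_{-k})$ and the data $(\bm x_k, f(\bm x_k))$ at which it is evaluated can change; but crucially $R_h(\bm z_{-k})$ does \emph{not} depend on $z_k$ at all, so that reconstruction is literally the same function for $\bm z$ and $\bm z'$. Hence the $i=k$ summand is $\frac1n|R_h(\bm z_{-k})(\bm x_k) - f(\bm x_k)|^2$ versus $\frac1n|R_h(\bm z_{-k})(\bm x_k') - f(\bm x_k')|^2$; each of these squared errors is at most $C_1^2$ by condition (i), so their difference is at most $C_1^2$, scaled by $1/n$ giving $C_1^2/n$. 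Adding the two contributions yields $|\CV(\bm z,h) - \CV(\bm z',h)| \le C_1^2/n + 2C_1C_2 = C_1(C_1/n + 2C_2)$, which is exactly $c_k$; since $k$ was arbitrary, the stated $\bm c$-boundedness follows.

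The one place that needs a little care — and what I would flag as the main (if modest) obstacle — is being honest about the $i=k$ term: one must resist the temptation to also route it through the $2C_1C_2$-type estimate (which would require $R_h(\bm z_{-k})$ to be close to $R_h(\bm z_{-k}')$, but here they are equal, so that is fine) and instead recognize that the genuine source of change is the validation datum itself, for which only the crude bound $\big||a|^2 - |b|^2\big| \le C_1^2$ with $|a|, |b| < C_1$ is available, since $f(\bm x_k)$ and $f(\bm x_k')$ are unrelated. This is why the $i=k$ term gets the non-decaying-in-the-right-way constant $C_1^2/n$ rather than something of order $C_1C_2/n$. I would also note, as in Lemma~\ref{lemma:cboundl2}, that for the term where $z_k$ does not appear the associated $c_k$ could in principle be taken smaller, but since we want a single vector $\bm c = C_1(C_1/n + 2C_2)\mathds 1$ valid for all coordinates we simply use this value throughout.
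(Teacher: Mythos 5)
Your proof is correct and follows essentially the same route as the paper: change one sample, split off the single summand whose validation datum changes (bounded by $C_1^2/n$ via condition (i) and $|a^2-b^2|\le\max\{a^2,b^2\}$), and bound each of the remaining $n-1$ summands by $2C_1C_2/n$ using the difference-of-squares factorization with conditions (i) and (ii). The only cosmetic difference is that the paper fixes the changed index to be the first by symmetry, while you keep a general index $k$.
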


\begin{proof} We have to check what happens if we change one component.
  For symmetry reasons we only have a look at what happens if we change the first sample.
  Let $\bm z, \bm z'\in\Xi$ be such that
  \begin{align*}
    \bm z
    = \left(z_1, \dots, z_n\right)
    \quad\text{and}\quad
    \bm z'
    = \left(z_1', z_2, \dots, z_n\right).
  \end{align*}
  By the triangle inequality we have
  \begin{align*}
    & |\CV(\bm z, h)-\CV(\bm z', h)| \\
& \le
    \frac 1n \left|
      \left| R_h(\bm z_{-1})(\bm x_1) - f(\bm x_1) \right|^2
      -
      \left| R_h(\bm z_{-1})(\bm x_1') - f(\bm x_1') \right|^2
    \right| \\
    &\phantom= +
      \frac 1n \sum_{i=2}^n \left| R_h(\bm z_{-i})(\bm x_i) - f(\bm x_i) + R_h(\bm z'_{-i})(\bm x_i) - f(\bm x_i) \right|
    \left| R_h(\bm z_{-i})(\bm x_i) - R_h(\bm z'_{-i})(\bm x_i) \right|.
  \end{align*}
  Using the properties of $\Xi$ and $|a^2-b^2| \le \max\{a^2, b^2\}$, we further estimate
  \begin{align*}
    |\CV(\bm z, h)-\CV(\bm z', h)|
    &\le
    \frac{C_1^2
    +
    2(n-1) C_1C_2}{n} \\
    &\le
    C_1(C_1/n + 2 C_2).
  \end{align*}
\end{proof}

The corresponding concentration result looks as follows.

\begin{theorem}\label{theorem:concentratecv} Let $\bm Z = (\bm X_i, f(\bm X_i))_{i=1}^n$ with $\bm X_i$ distributed independent and identically according to $\rho$ on $\Omega$.
  Further, let
  \begin{align*}
    m = \mathds E\{ \CV(\bm Z, h) | \bm Z\in\Xi \},
  \end{align*}
  be the expected value of the cross-validation score $\CV(\bm Z, h)$ restricted to $\Xi=\Xi(h, C_1, C_2)$ from Definition~\ref{definition:alice}, and
  $\gamma = 1-\mathds P\{\bm Z\in\Xi\}$ the probability of $\bm Z$ not being in $\Xi$.

  Then for $\varepsilon > 2\gamma n C_1 C_2+\gamma C_1^2$ we obtain the concentration of the cross-validation score
  \begin{align*}
    \mathds P \left\{ |\CV(\bm Z, h)-m | > \varepsilon \right\}
    & \le
    2\gamma + 2\exp\left(
      -\left(\frac{\sqrt{2}\varepsilon}{C_1(C_1/\sqrt{n}+2\sqrt{n}C_2)}-\sqrt{2n}\gamma\right)^2
    \right) \\
    & \le
    2\gamma + 2\exp\left(
      -\left(\frac{\varepsilon}{3\sqrt{n} C_1^2}-\sqrt{2n}\gamma\right)^2
    \right) \\
  \end{align*}
  where the second inequality holds for $n\ge 5$.
\end{theorem}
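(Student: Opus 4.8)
The plan is to combine Lemma~\ref{lemma:cboundcv} with the extended McDiarmid inequality (Theorem~\ref{theorem:combes}) and then simplify the resulting exponent. Concretely, Lemma~\ref{lemma:cboundcv} tells us that $\bm z \mapsto \CV(\bm z, h)$ is $\bm c$-bounded on $\Xi$ with $\bm c = C_1(C_1/n + 2C_2)\mathds 1$. Hence $\|\bm c\|_1 = n C_1(C_1/n + 2C_2) = C_1^2 + 2nC_1C_2$ and $\|\bm c\|_2^2 = n C_1^2(C_1/n + 2C_2)^2$. Feeding these into Theorem~\ref{theorem:combes} and using the definitions of $m$ and $\gamma$ from the statement, the condition $\varepsilon > \gamma\|\bm c\|_1$ becomes exactly $\varepsilon > 2\gamma n C_1C_2 + \gamma C_1^2$, and the bound reads
\begin{align*}
  \mathds P\{|\CV(\bm Z, h) - m| > \varepsilon\}
  \le 2\gamma + 2\exp\left(-\frac{2(\varepsilon - \gamma C_1^2 - 2\gamma n C_1C_2)^2}{n C_1^2(C_1/n + 2C_2)^2}\right).
\end{align*}

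Next I would massage the argument of the exponential into the displayed form. Writing $\varepsilon - \gamma\|\bm c\|_1 = \varepsilon - \gamma(C_1^2 + 2nC_1C_2)$ and pulling the factor $2$ into the square, the exponent equals $-\bigl((\varepsilon - \gamma\|\bm c\|_1)\sqrt{2}/\|\bm c\|_2\bigr)^2$. Since $\|\bm c\|_2 = \sqrt{n}\,C_1(C_1/n + 2C_2) = C_1(C_1/\sqrt n + 2\sqrt n C_2)$, this is
\begin{align*}
  -\left(\frac{\sqrt 2\,\varepsilon}{C_1(C_1/\sqrt n + 2\sqrt n C_2)} - \frac{\sqrt 2\,\gamma(C_1^2 + 2nC_1C_2)}{C_1(C_1/\sqrt n + 2\sqrt n C_2)}\right)^2,
\end{align*}
and a direct computation shows the second term inside the square simplifies to $\sqrt{2n}\,\gamma$ because $\sqrt 2\,(C_1^2 + 2nC_1C_2) = \sqrt{2n}\,(C_1/\sqrt n + 2\sqrt n C_2)\cdot C_1$. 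This yields the first claimed inequality.

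For the second, coarser inequality I would specialize to $C_2 \le 2C_1$, as in Remark~\ref{remark:c1c2}~(i), so that $C_1(C_1/\sqrt n + 2\sqrt n C_2) \le C_1(C_1/\sqrt n + 4\sqrt n C_1) = C_1^2(1/\sqrt n + 4\sqrt n)$. It then remains to check that $\sqrt 2 / (1/\sqrt n + 4\sqrt n) \ge 1/(3\sqrt n)$, equivalently $3\sqrt 2\,\sqrt n \ge 1/\sqrt n + 4\sqrt n$, i.e. $(3\sqrt 2 - 4)\sqrt n \ge 1/\sqrt n$, i.e. $n \ge 1/(3\sqrt 2 - 4) \approx 4.44$; hence $n \ge 5$ suffices. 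Replacing the denominator by the larger $3\sqrt n C_1^2$ only decreases the first term in the square while the subtracted $\sqrt{2n}\,\gamma$ is unchanged, and since the overall expression under the square is nonnegative by the hypothesis on $\varepsilon$, the exponential bound can only grow, giving the stated estimate.

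The only mildly delicate point is the bookkeeping in the second step: one must verify the algebraic identity $\sqrt 2(C_1^2 + 2nC_1C_2) = \sqrt{2n}\,C_1(C_1/\sqrt n + 2\sqrt n C_2)$ and, for the coarse bound, track that enlarging the denominator is a legitimate one-sided estimate given $\varepsilon - \gamma\|\bm c\|_1 \ge 0$. Neither is hard, but both are the kind of sign-of-the-inequality check that is easy to get backwards, so I would carry the nonnegativity of $\varepsilon - \gamma\|\bm c\|_1$ explicitly through the argument.
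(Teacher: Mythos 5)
Your proof is exactly the paper's: it feeds the $\bm c$-boundedness from Lemma~\ref{lemma:cboundcv} into Theorem~\ref{theorem:combes}, and your bookkeeping is right ($\|\bm c\|_1 = C_1^2+2nC_1C_2$ reproduces the threshold on $\varepsilon$, $\|\bm c\|_2=C_1(C_1/\sqrt n+2\sqrt n C_2)$ gives the first exponent, the subtracted term collapses to $\sqrt{2n}\gamma$ since $\|\bm c\|_1/\|\bm c\|_2=\sqrt n$, and for the coarse bound $C_2\le 2C_1$ from Remark~\ref{remark:c1c2}(i) plus $n\ge 1/(3\sqrt2-4)\approx 4.13$ justifies the constant $3\sqrt n C_1^2$ for $n\ge 5$), which is precisely what the paper's two-line proof invokes. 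The only soft spot is your justification of the last monotonicity step: the hypothesis $\varepsilon>\gamma\|\bm c\|_1$ guarantees nonnegativity of $\sqrt2\varepsilon/\|\bm c\|_2-\sqrt{2n}\gamma$ but not of $\varepsilon/(3\sqrt n C_1^2)-\sqrt{2n}\gamma$, so enlarging the denominator is not automatically a one-sided estimate in every parameter regime -- but the paper's own proof dismisses this identical point as ``basic calculus,'' so your argument matches the paper's in both substance and level of rigor.
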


\begin{proof} Applying Lemma~\ref{lemma:cboundcv} and Theorem~\ref{theorem:combes} gives the first inequality.
  The second one is obtained by using Remark~\ref{remark:c1c2} (i), $n \ge 5$, and basic calculus.
\end{proof}

Next, we prepare the connection of the two previous theorems by connecting the expected values of the risk functional and the cross-validation score.

\begin{lemma}\label{lemma:connection} The expected value of the risk functional for $n-1$ nodes is equal to the expected value of the cross-validation score for $n$ nodes, i.e.,
  \begin{align*}
    \mathds E_{\bm Z'} \left\{
      \mathcal E( R_h(\bm Z') )
    \right\}
    &= \mathds E_{\bm Z} \left\{
      \CV(\bm Z, h)
    \right\}
  \end{align*}
  for $\bm Z'=(\bm X_i',f(\bm X_i'))_{i=1}^{n-1}$ representing $n-1$ samples and $\bm Z=(\bm X_i,f(\bm X_i))_{i=1}^n$ representing $n$ samples where $\bm X_i$, $\bm X_i'$ are distributed independent and identically according to $\rho$.
\end{lemma}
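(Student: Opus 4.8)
The plan is to unfold the definition of the cross-validation score, move the expectation inside the finite sum by linearity, and then exploit the crucial structural fact that, for each $i$, the left-out node $\bm X_i$ is independent of the remaining data $\bm Z_{-i}$ used to build the reconstruction. Concretely, I would start from
\[
  \mathds E_{\bm Z}\{\CV(\bm Z, h)\}
  = \frac 1n \sum_{i=1}^n \mathds E_{\bm Z}\left\{ \left| R_h(\bm Z_{-i})(\bm X_i) - f(\bm X_i) \right|^2 \right\}.
\]
For a fixed $i$, the vector $\bm Z_{-i}$ is a function of $(\bm X_j)_{j\neq i}$ only, hence independent of $\bm X_i$. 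Conditioning on $\bm Z_{-i}$ and integrating out $\bm X_i$ first turns the inner expectation into the integral against $\rho$, so that
\[
  \mathds E_{\bm Z}\left\{ \left| R_h(\bm Z_{-i})(\bm X_i) - f(\bm X_i) \right|^2 \right\}
  = \mathds E_{\bm Z_{-i}}\left\{ \int_\Omega \left| R_h(\bm Z_{-i})(\bm x) - f(\bm x) \right|^2 \,\mathrm d\rho(\bm x) \right\}
  = \mathds E_{\bm Z_{-i}}\left\{ \mathcal E(R_h(\bm Z_{-i})) \right\},
\]
recognizing the inner integral as the risk functional \eqref{eq:l2err} of the reconstruction built from the $n-1$ surviving samples.

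The last step is to observe that $\bm Z_{-i}$ is, for every $i$, a vector of $n-1$ i.i.d.\ copies of $(\bm X, f(\bm X))$, hence equal in distribution to $\bm Z'$; therefore $\mathds E_{\bm Z_{-i}}\{\mathcal E(R_h(\bm Z_{-i}))\} = \mathds E_{\bm Z'}\{\mathcal E(R_h(\bm Z'))\}$, a value that does not depend on $i$. Substituting this back, the average over $i$ of $n$ identical terms collapses and yields $\mathds E_{\bm Z}\{\CV(\bm Z, h)\} = \mathds E_{\bm Z'}\{\mathcal E(R_h(\bm Z'))\}$, which is precisely the claim.

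I do not anticipate a genuine obstacle here. The only point that needs a word of justification is the interchange of expectation over $\bm Z_{-i}$ with the integral over $\Omega$ defining $\mathcal E(R_h(\bm Z_{-i}))$; since the integrand $\left| R_h(\bm Z_{-i})(\bm x) - f(\bm x) \right|^2$ is nonnegative, this is unconditionally valid by Tonelli's theorem, once one notes the (assumed) joint measurability of $(\bm z, \bm x) \mapsto R_h(\bm z_{-i})(\bm x)$ that is implicit in speaking of $\mathcal E(R_h(\bm Z_{-i}))$ as a random variable in the first place. No restriction to the set $\Xi$ is needed for this lemma, as the identity is an exact statement about unconditional expectations.
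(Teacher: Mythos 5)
Your proposal is correct and follows essentially the same argument as the paper: the identities you use — that $\bm Z_{-i}$ is distributed as $\bm Z'$, that independence of $\bm X_i$ from $\bm Z_{-i}$ turns the expected squared error at the left-out node into the risk integral, and linearity over the $n$ terms — are exactly the steps in the paper's proof, merely traversed from the cross-validation side rather than from the risk side. The added remark on Tonelli/measurability is a harmless refinement the paper leaves implicit.
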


\begin{proof} Since for all $1\le i\le n$ the $\bm Z_{-i}$ have the same distribution as $\bm Z'$ we write
  \begin{align*}
    \mathds E_{\bm Z'} \left\{
      \mathcal E( R_h(\bm Z') )
    \right\}
    &= \frac 1n\sum_{i=1}^n
    \mathds E_{\bm Z_{-i}}\left\{
      \mathcal E( R_h(\bm Z_{-i}) )
    \right\}.
  \end{align*}
  Instead of using $\mathds E_{\bm Z_{-i}}$, we use $\mathds E_{\bm Z}$ since $Z_i$ does not occur in the corresponding terms
  \begin{align*}
    \mathds E_{\bm Z'} \left\{
      \mathcal E( R_h(\bm Z') )
    \right\}
    = \frac 1n\sum_{i=1}^n
    \mathds E_{\bm Z}\left\{
      \mathcal E( R_h(\bm Z_{-i}) )
    \right\}
    =
    \frac 1n\sum_{i=1}^n
      \mathds E_{\bm Z}\left\{\left|R_h(\bm Z_{-i})(\bm x_i)-f(\bm x_i)\right|^2
    \right\}.
  \end{align*}
  By linearity of the expected value we obtain the assertion
  \begin{align*}
    \mathds E_{\bm Z} \left\{
      \mathcal E( R_h(\bm Z') )
    \right\}
    = \mathds E_{\bm Z}\left\{
    \frac 1n\sum_{i=1}^n
      \left|R_h(\bm Z_{-i})(\bm x_i)-f(\bm x_i)\right|^2
    \right\}
    = \mathds E_{\bm Z}\left\{
      \CV(\bm Z, h)
    \right\}.
  \end{align*}
\end{proof}

Having all the necessary tools, we state a central theorem bringing together risk functional and cross-validation score.

\begin{theorem}\label{theorem:connect} Let $\bm Z = (\bm X_i, f(\bm X_i))_{i=1}^n$ with $\bm X_i$ distributed independent and identically according to $\rho$ on $\Omega$
  and $R_h \colon (\Omega \times Y)^n \to Y^\Omega$ be a reconstruction method.
  Further, let
  \begin{align*}
    M = \sup_{\bm x_1,\dots,\bm x_{n-1}\in\Omega } \|R_h((\bm x_i, f(\bm x_i))_{i=1}^{n-1})\|_\infty
  \end{align*}
  be a uniform bound on the reconstruction for arbitrary nodes and
  $\gamma = 1-\mathds P\{\bm Z\in\Xi\}$ the probability of $\bm Z$ not being in $\Xi = \Xi(h, C_1, C_2)\subseteq (\Omega\times Y)^n$ from Definition~\ref{definition:alice}.

  Then for $\varepsilon > 2\gamma\max\{ 4nC_1C_2+ C_1^2, (M+\|f\|_\infty)^2\}$ we have the concentration bound of the difference of cross-validation score $\CV(\bm Z, h)$ and risk functional $\mathcal E(R_h(\bm Z_{-1}))$
  \begin{align*}
    &\mathds P\left\{
      \left| \CV(\bm Z, h) - \mathcal E(R_h(\bm Z_{-1})) \right|
      > \varepsilon \right\} \\
    &\le
    2\gamma
    + 2\exp\left(-\left(
      \frac{\varepsilon}{\sqrt 2C_1(C_1/\sqrt{n}+4\sqrt{n}C_2)} - \sqrt{2n}\gamma
    \right)^2 \right) \\
    &\le
    2\gamma
    + 2\exp\left(-\left(
      \frac{\varepsilon}{12\sqrt n C_1^2} - \sqrt{2n}\gamma
    \right)^2 \right)
  \end{align*}
  where the second inequality holds for $n\ge 3$.
  In particular, for $\delta>0$, we have with probability larger than $1-2(\gamma+\delta)$
  \begin{align*}
    &|\CV(\bm Z, h) - \mathcal E(R_h(\bm Z_{-1}))| \\
    &\le \max\left\{ 2\gamma(M+\|f\|_\infty)^2, \left(\sqrt 2C_1\left(\frac{C_1}{\sqrt{n}}+4\sqrt{n}C_2\right)\right))\left(\sqrt{2n}\gamma+\sqrt{-\log \delta}\right)\right\} \\
    &\le \max\left\{ 2\gamma(M+\|f\|_\infty)^2, 12\sqrt{n} C_1^2\left(\sqrt{2n}\gamma+\sqrt{-\log \delta}\right)\right\}.
  \end{align*}
\end{theorem}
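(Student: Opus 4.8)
The plan is to combine the two concentration inequalities of Theorems~\ref{theorem:concentratel2} and~\ref{theorem:concentratecv} with the equality of expected values from Lemma~\ref{lemma:connection} by a union-bound argument over the two events. Write $m_{\CV} = \mathds E\{\CV(\bm Z,h)\mid\bm Z\in\Xi\}$ and $m_{\mathcal E} = \mathds E\{\mathcal E(R_h(\bm Z_{-1}))\mid\bm Z\in\Xi\}$ for the two conditional expectations. The triangle inequality gives
\begin{align*}
  |\CV(\bm Z,h) - \mathcal E(R_h(\bm Z_{-1}))|
  \le |\CV(\bm Z,h) - m_{\CV}| + |m_{\CV} - m_{\mathcal E}| + |m_{\mathcal E} - \mathcal E(R_h(\bm Z_{-1}))|.
\end{align*}
So if the difference exceeds $\varepsilon$, then one of the two fluctuation terms exceeds $(\varepsilon - |m_{\CV}-m_{\mathcal E}|)/2$, and the union bound lets us add the two tail probabilities from Theorems~\ref{theorem:concentratel2} and~\ref{theorem:concentratecv}.

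The first real step is to control the bias $|m_{\CV} - m_{\mathcal E}|$ between the two conditional means. By Lemma~\ref{lemma:connection} the \emph{unconditional} expectations agree, $\mathds E\{\CV(\bm Z,h)\} = \mathds E\{\mathcal E(R_h(\bm Z_{-1}))\}$ (noting $\bm Z_{-1}$ has the same law as the $(n-1)$-sample vector $\bm Z'$); the discrepancy between conditional and unconditional expectation is paid for on the event $\{\bm Z\notin\Xi\}$, which has probability $\gamma$. Here the uniform bound $M$ enters: on the complement we still have the crude pointwise bound $|\CV(\bm Z,h)|, |\mathcal E(R_h(\bm Z_{-1}))| \le (M+\|f\|_\infty)^2$ since each summand is of the form $|R_h(\cdot)(\bm x)-f(\bm x)|^2 \le (M+\|f\|_\infty)^2$, and the integrand in~\eqref{eq:l2err} obeys the same bound. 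A short computation of the form $|\mathds E\{g\mid\Xi\} - \mathds E\{g\}| = |\mathds E\{g\,\mathds 1_{\Xi^c}\} - \gamma\,\mathds E\{g\mid\Xi\}| \le 2\gamma\|g\|_\infty$ then yields $|m_{\CV}-m_{\mathcal E}| \le 2\gamma(M+\|f\|_\infty)^2$. This is exactly the first entry in the max in the hypothesis on $\varepsilon$ and in the final bound: we require $\varepsilon$ large enough that $\varepsilon - |m_{\CV}-m_{\mathcal E}|$ is still positive and, moreover, exceeds the $\gamma\|\bm c\|_1$-thresholds of the two component theorems, which accounts for the other term $4nC_1C_2 + C_1^2$ in the max (the sum $2nC_1C_2$ from Theorem~\ref{theorem:concentratel2} plus $2nC_1C_2+\gamma C_1^2$ from Theorem~\ref{theorem:concentratecv}, bounded crudely).

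Next I would assemble the exponential rate. Feeding $(\varepsilon - 2\gamma(M+\|f\|_\infty)^2)/2$ into each of the two tail bounds, the slower of the two exponents dominates; comparing the $\bm c$-vectors $2C_1C_2\mathds 1$ (risk) and $C_1(C_1/n+2C_2)\mathds 1$ (CV), their sum governs the combined rate, giving a denominator of the shape $\sqrt 2\,C_1(C_1/\sqrt n + 4\sqrt n\,C_2)$ inside the square — note $2C_2 + (C_1/n + 2C_2) = C_1/n + 4C_2$, and multiplying through by $\sqrt n$ produces the stated form. The coarser bound with $12\sqrt n\,C_1^2$ follows by inserting $C_2 \le 2C_1$ from Remark~\ref{remark:c1c2}(i) and the elementary estimate $C_1/\sqrt n + 8\sqrt n\,C_1 \le 9\sqrt n\,C_1$ (valid for $n\ge 1$), then absorbing the $\sqrt 2$; the constraint $n\ge 3$ is the slack needed to make the clean constant $12$ work after the various crude steps. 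Finally, the high-probability reformulation: set the tail probability equal to $2(\gamma+\delta)$, i.e. $2\exp(-(\cdots)^2) = 2\delta$, solve $(\varepsilon/(\sqrt2 C_1(C_1/\sqrt n + 4\sqrt n C_2)) - \sqrt{2n}\gamma)^2 = -\log\delta$ for $\varepsilon$, and take the max with $2\gamma(M+\|f\|_\infty)^2$ to honor the threshold constraint on $\varepsilon$.

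The main obstacle is bookkeeping rather than conceptual: getting the threshold condition on $\varepsilon$ to simultaneously (a) keep $\varepsilon - |m_{\CV}-m_{\mathcal E}|$ above both $\gamma\|\bm c\|_1$-thresholds after halving, and (b) collapse into the single clean max appearing in the statement, while (c) tracking the crude constants carefully enough that the advertised $12\sqrt n\,C_1^2$ and the side condition $n\ge3$ come out exactly. I would handle this by first proving the sharp two-constant version and only at the very end invoking $C_2\le 2C_1$ and $n\ge 3$ to simplify, so that the delicate inequality-chasing happens once, in the cleanest possible setting.
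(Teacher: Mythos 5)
There is a genuine structural gap: your union-bound route over two separate concentration events cannot produce the bound as stated. Applying Theorem~\ref{theorem:concentratel2} and Theorem~\ref{theorem:concentratecv} separately and adding the tails gives a total failure probability of the form $4\gamma + 2\exp(\cdot) + 2\exp(\cdot)$, i.e.\ at best $4\gamma + 4\exp(\cdot)$, whereas the theorem claims $2\gamma + 2\exp(\cdot)$ (and correspondingly $1-2(\gamma+\delta)$, not $1-4(\gamma+\delta)$, in the reformulation). Moreover, your claim that under the union bound ``the sum of the two $\bm c$-vectors governs the combined rate'' is not a valid step: a union bound leaves you with two separate exponents, each with its own denominator $2\sqrt{2n}\,C_1C_2$ resp.\ $\sqrt 2\,C_1(C_1/\sqrt n + 2\sqrt n C_2)$; the summed denominator $\sqrt 2\,C_1(C_1/\sqrt{n}+4\sqrt{n}C_2)$ does not emerge from adding tail probabilities, only from bounding the differences of a single function. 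The paper's proof avoids both losses by treating $\bm z \mapsto \CV(\bm z, h) - \mathcal E(R_h(\bm z_{-1}))$ as one function, showing it is $\bm c$-bounded on $\Xi$ with per-coordinate constant $4C_1C_2 + C_1^2/n$ (the sum of the constants from Lemmata~\ref{lemma:cboundl2} and \ref{lemma:cboundcv}), and applying Theorem~\ref{theorem:combes} exactly once at level $\varepsilon/2$; this is where the denominator $\sqrt 2\,C_1(C_1/\sqrt{n}+4\sqrt{n}C_2)$, the single $2\gamma$, and the threshold $\gamma\|\bm c\|_1 = \gamma(4nC_1C_2+C_1^2)$ all come from.

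Your handling of the bias term is essentially the paper's (and in fact slightly more careful: the conditional-versus-unconditional discrepancy is bounded by $2\gamma(M+\|f\|_\infty)^2$, which is consistent with the threshold $\varepsilon > 2\gamma(M+\|f\|_\infty)^2$), and using Lemma~\ref{lemma:connection} to kill the unconditional mean of the difference is the right idea. But note two further points: the paper does not subtract the bias from $\varepsilon$ inside the exponent; it splits off the deterministic bias as a separate event whose probability is zero under the assumption on $\varepsilon$, which is what keeps plain $\varepsilon$ in the exponential. And your derivation of the coarse constant ($C_1/\sqrt n + 8\sqrt n C_1 \le 9\sqrt n C_1$ for $n\ge 1$, then ``absorbing $\sqrt 2$'') gives $9\sqrt 2 > 12$; you genuinely need $n\ge 3$, e.g.\ $1/\sqrt n + 8\sqrt n \le (8+\tfrac13)\sqrt n$ so that $\sqrt 2\,(8+\tfrac13) \le 12$. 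To repair the proposal, replace the two-event union bound by the single application of Theorem~\ref{theorem:combes} to the difference; the rest of your outline then goes through.
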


\begin{proof} By the triangle inequality\footnote{
    One might argue that using triangle inequality with the expected values one looses all information on the specific sample $\bm z$, which worsens the bound.
    However, \cite{BHT21} suggests that $\CV(\cdot, h)$ estimates $\mathds E\{\mathcal E(R_h(\bm Z))\}$ rather than $\mathcal E(R_h(\bm z))$ itself, which reasons for our approach.
  } we have for fixed $\bm z\in(\Omega\times Y)^n$
  \begin{align*}
    &\big|\CV(\bm z, h) - \mathcal E(R_h(\bm z_{-1}))\big| \\
    &\le \left|\CV(\bm z, h) - \mathcal E(R_h(\bm z_{-1}))
    - \mathds E\left\{ \CV(\bm Z, h) - \mathcal E(R_h(\bm Z_{-1})) \middle| \bm Z\in\Xi \right\}\right| \\
    &\phantom= + \left|\mathds E\left\{ \CV(\bm Z, h) - \mathcal E(R_h(\bm Z_{-1})) \middle| \bm Z\in\Xi \right\}\right|.
  \end{align*}
  By Lemma~\ref{lemma:connection} we have $\mathds E\{ \CV(\bm Z, h) - \mathcal E(R_h(\bm Z_{-1})) \} = 0$ and, thus, estimate the second summand by
  \begin{align*}
    &\left|\mathds E\left\{ \CV(\bm Z, h) - \mathcal E(R_h(\bm Z_{-1})) \middle| \bm Z\in\Xi \right\}
    - \mathds E\left\{ \CV(\bm Z, h) - \mathcal E(R_h(\bm Z_{-1})) \right\}\right| \\
    &\le \int_{(\Omega\times Y)^n\setminus\Xi} \left| \CV(\bm z, h) - \mathcal E(R_h(\bm z_{-1}))\right| \;\mathrm d\bm z \\
    &\le \int_{(\Omega\times Y)^n\setminus\Xi}
    \left(M+\|f\|_\infty\right)^2
    \; \mathrm d\bm z \\
    &\le
    \left(M+\|f\|_\infty\right)^2
    \gamma
  \end{align*}
  where the last inequality follows from $\mathds P\{\bm Z\notin\Xi\} \le \gamma$.
  Thus, we obtain
  \begin{align*}
    &\mathds P\left\{
      \left| \CV(\bm Z, h) - \mathcal E(R_h(\bm Z_{-1})) \right|
      > \varepsilon \right\} \\
    &\le \mathds P\left\{ \left|\CV(\bm z, h) - \mathcal E(R_h(\bm z_{-1}))
    - \mathds E\left\{ \CV(\bm Z, h) - \mathcal E(R_h(\bm Z_{-1})) \middle| \bm Z\in\Xi \right\}\right|
    > \frac{\varepsilon}{2}\right\} \\
    &\phantom= +
    \mathds P\left\{
    \left(M+\|f\|_\infty\right)^2
    \gamma
    > \frac{\varepsilon}{2}\right\}.
  \end{align*}
  By the assumption on $\varepsilon$ the latter probability evaluates to zero.

  It is left to bound the first summand.
  Similar to the proofs of Lemmata~\ref{lemma:cboundl2} and \ref{lemma:cboundcv} we will bound the remaining concentration by Theorem~\ref{theorem:combes}.
  For $\bm z$ and $\bm z'\in\Xi$, which differ in one component, we have
  \begin{align*}
    &|\CV(\bm z, h) - \mathcal E(R_h(\bm z_{-1})) -\CV(\bm z', h) + \mathcal E(R_h(\bm z_{-1}'))| \\
    &\le|\CV(\bm z, h)-\CV(\bm z', h)|+|\mathcal E(R_h(\bm z_{-1})) - \mathcal E(R_h(\bm z_{-1}'))| \\
    &\le 4C_1C_2+\frac{C_1^2}{n},
  \end{align*}
  i.e., $\CV(\bm z, h) - \mathcal E(R_h(\bm z_{-1}))$ is $\bm c$-bounded.
  Thus, with Theorem~\ref{theorem:combes} we obtain
  \begin{align*}
    & \mathds P\left\{
      \left|\CV(\bm z, h) - \mathcal E(R_h(\bm z_{-1})) - \mathds E\left\{ \CV(\bm z, h) + \mathcal E(R_h(\bm z_{-1}))\right\}\right| > \varepsilon
    \right\} \\
    & \le 2\gamma+2\exp\left(-\left(
      \frac{\varepsilon}{\sqrt 2C_1(C_1/\sqrt{n}+4\sqrt{n}C_2)} - \sqrt{2n}\gamma
    \right)^2 \right)
  \end{align*}
  for $\varepsilon > 2\gamma( 4nC_1C_2+ C_1^2)$.
\end{proof}

\begin{remark}
  \begin{enumerate}[(i)]
  \item
  If, for a specific reconstruction method $R_h$, we have
  \begin{itemize}
    \item a uniform bound $M$ on the reconstructions $R_h(\bm z)$, $\bm z = (\bm x_i, f(\bm x_i))_{i=1}^n \in(\Omega\times Y)^n$ and
    \item a bound $C_1$ on the reconstructions error of $R_h(\bm z)$ which holds with probability $1-\gamma$,
  \end{itemize}
  then Theorem~\ref{theorem:connect} states, that with slightly smaller probability $1-2(\gamma+\delta)$, computing the cross-validation score $\CV(\bm z, h)$ is the same as computing the risk $\mathcal E(R_h(\bm z))$ up to a small additive constant $\varepsilon$ that can be computed explicitly from $C_1$, $M$, $\gamma$, and $\delta$.
\item
    For now we have a statement for one reconstruction method $R_h$.
    But we easily obtain error guarantees for the parameter $h_{\CV}$ minimizing the cross-validation score $\CV(\bm z, \cdot)$:

    Let $h^\ast$ be the minimizer of $h\mapsto \mathcal E(R_{h}(\bm z))$.
    By using
    \begin{align*}
      &\mathds P\left\{
        \mathcal E(R_{h_{\CV}}(\bm Z_{-1})) - \mathcal E(R_{h^\ast}(\bm Z_{-1}))
        > \varepsilon
      \right\} \\
      &\le
      \mathds P\left\{
        \mathcal E(R_{h_{\CV}}(\bm Z_{-1}))
        - \CV(\bm Z, h_{\CV}) + \CV(\bm Z, h^\ast)
        - \mathcal E(R_{h^\ast}(\bm Z_{-1}))
        > \varepsilon
      \right\} \\
      &\le
      \mathds P\left\{
        \left| \mathcal E(R_{h_{\CV}}(\bm Z_{-1}))
        - \CV(\bm Z, h_{\CV}) \right|
        > \frac{\varepsilon}{2}
      \right\}
      +
      \mathds P\left\{
        \left| \CV(\bm Z, h^\ast)
        - \mathcal E(R_{h^\ast}(\bm Z_{-1})) \right|
        > \frac{\varepsilon}{2}
      \right\}
    \end{align*}
    we apply Theorem~\ref{theorem:connect} twice and have that with high probability minimizing the cross-validation score is just $\varepsilon$ worse in terms of the risk.
  \end{enumerate}
\end{remark}

\begin{remark}
  In order to derive asymptotic rates out of Theorem~\ref{theorem:connect}, we fix the probability $\delta$ and assume that the reconstruction error of $R_h$ decays asymptotically as $C_1\sim n^{-r}$ with probability at least $1-n^{-2r}$.
  Then the difference of cross-validation score $\CV$ and the risk functional $\mathcal E(R_h(\bm z))$ decays like $n^{1/2-2r}$.
\end{remark}

 \section{Application using Shepard's model}\label{sec:shepard}

Since this paper was motivated by \cite[Chapter 8]{GKKH02}, where Shepard's model was used in the context of binary kernels, it seemed natural to start off with this application.
Shepard's model or the Nadaraya-Watson estimator is a special case of moving least squares.
It was introduced in \cite{Nadaraya64, Watson64, Shepard68} and is now-days widely used for solving PDEs \cite{NTV92, BLG94}, manifold learning \cite{SL20}, or computer graphics \cite{SMT06}.
Introductory information about this topic can be found in \cite{Fa07}.

The crucial ingredient in Shepard's model is a, often locally supported, kernel function $K_h$.
Given a sampling $\bm z = (x_i, f(x_i))_{i = 1}^n$ the model has the form
\begin{align}\label{eq:dendermann}
  R_h(\bm z)
  = \frac{\sum_{i=1}^m K_h(\cdot, x_i)f(x_i)}
  {\sum_{i=1}^m K_h(\cdot, x_i)}.
\end{align}
A one-dimensional example for differently localized kernels is shown in Figure~\ref{fig:example}, which emphasizes the importance of the kernel choice.
In this section we propose cross-validation as a method for choosing an optimal kernel and give an explicit error bound for the difference of risk functional \eqref{eq:l2err} and cross-validation score \eqref{eq:cv}.
This is verified with numerical examples.

\begin{figure}
  \centering
  \begin{subfigure}{0.32\linewidth}
    \centering
    \begingroup
  \makeatletter
  \providecommand\color[2][]{\GenericError{(gnuplot) \space\space\space\@spaces}{Package color not loaded in conjunction with
      terminal option `colourtext'}{See the gnuplot documentation for explanation.}{Either use 'blacktext' in gnuplot or load the package
      color.sty in LaTeX.}\renewcommand\color[2][]{}}\providecommand\includegraphics[2][]{\GenericError{(gnuplot) \space\space\space\@spaces}{Package graphicx or graphics not loaded}{See the gnuplot documentation for explanation.}{The gnuplot epslatex terminal needs graphicx.sty or graphics.sty.}\renewcommand\includegraphics[2][]{}}\providecommand\rotatebox[2]{#2}\@ifundefined{ifGPcolor}{\newif\ifGPcolor
    \GPcolortrue
  }{}\@ifundefined{ifGPblacktext}{\newif\ifGPblacktext
    \GPblacktexttrue
  }{}\let\gplgaddtomacro\g@addto@macro
\gdef\gplbacktext{}\gdef\gplfronttext{}\makeatother
  \ifGPblacktext
\def\colorrgb#1{}\def\colorgray#1{}\else
\ifGPcolor
      \def\colorrgb#1{\color[rgb]{#1}}\def\colorgray#1{\color[gray]{#1}}\expandafter\def\csname LTw\endcsname{\color{white}}\expandafter\def\csname LTb\endcsname{\color{black}}\expandafter\def\csname LTa\endcsname{\color{black}}\expandafter\def\csname LT0\endcsname{\color[rgb]{1,0,0}}\expandafter\def\csname LT1\endcsname{\color[rgb]{0,1,0}}\expandafter\def\csname LT2\endcsname{\color[rgb]{0,0,1}}\expandafter\def\csname LT3\endcsname{\color[rgb]{1,0,1}}\expandafter\def\csname LT4\endcsname{\color[rgb]{0,1,1}}\expandafter\def\csname LT5\endcsname{\color[rgb]{1,1,0}}\expandafter\def\csname LT6\endcsname{\color[rgb]{0,0,0}}\expandafter\def\csname LT7\endcsname{\color[rgb]{1,0.3,0}}\expandafter\def\csname LT8\endcsname{\color[rgb]{0.5,0.5,0.5}}\else
\def\colorrgb#1{\color{black}}\def\colorgray#1{\color[gray]{#1}}\expandafter\def\csname LTw\endcsname{\color{white}}\expandafter\def\csname LTb\endcsname{\color{black}}\expandafter\def\csname LTa\endcsname{\color{black}}\expandafter\def\csname LT0\endcsname{\color{black}}\expandafter\def\csname LT1\endcsname{\color{black}}\expandafter\def\csname LT2\endcsname{\color{black}}\expandafter\def\csname LT3\endcsname{\color{black}}\expandafter\def\csname LT4\endcsname{\color{black}}\expandafter\def\csname LT5\endcsname{\color{black}}\expandafter\def\csname LT6\endcsname{\color{black}}\expandafter\def\csname LT7\endcsname{\color{black}}\expandafter\def\csname LT8\endcsname{\color{black}}\fi
  \fi
    \setlength{\unitlength}{0.0500bp}\ifx\gptboxheight\undefined \newlength{\gptboxheight}\newlength{\gptboxwidth}\newsavebox{\gptboxtext}\fi \setlength{\fboxrule}{0.5pt}\setlength{\fboxsep}{1pt}\begin{picture}(2540.00,1460.00)\gplgaddtomacro\gplbacktext{}\gplgaddtomacro\gplfronttext{}\gplbacktext
    \put(0,0){\includegraphics[width={127.00bp},height={73.00bp}]{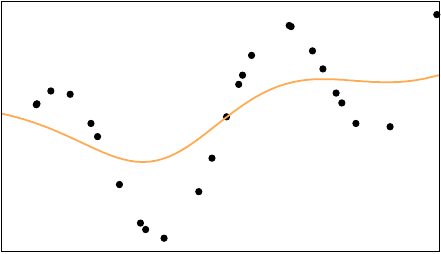}}\gplfronttext
  \end{picture}\endgroup
     \caption{wide support}
  \end{subfigure}
  \begin{subfigure}{0.32\linewidth}
    \centering
    \begingroup
  \makeatletter
  \providecommand\color[2][]{\GenericError{(gnuplot) \space\space\space\@spaces}{Package color not loaded in conjunction with
      terminal option `colourtext'}{See the gnuplot documentation for explanation.}{Either use 'blacktext' in gnuplot or load the package
      color.sty in LaTeX.}\renewcommand\color[2][]{}}\providecommand\includegraphics[2][]{\GenericError{(gnuplot) \space\space\space\@spaces}{Package graphicx or graphics not loaded}{See the gnuplot documentation for explanation.}{The gnuplot epslatex terminal needs graphicx.sty or graphics.sty.}\renewcommand\includegraphics[2][]{}}\providecommand\rotatebox[2]{#2}\@ifundefined{ifGPcolor}{\newif\ifGPcolor
    \GPcolortrue
  }{}\@ifundefined{ifGPblacktext}{\newif\ifGPblacktext
    \GPblacktexttrue
  }{}\let\gplgaddtomacro\g@addto@macro
\gdef\gplbacktext{}\gdef\gplfronttext{}\makeatother
  \ifGPblacktext
\def\colorrgb#1{}\def\colorgray#1{}\else
\ifGPcolor
      \def\colorrgb#1{\color[rgb]{#1}}\def\colorgray#1{\color[gray]{#1}}\expandafter\def\csname LTw\endcsname{\color{white}}\expandafter\def\csname LTb\endcsname{\color{black}}\expandafter\def\csname LTa\endcsname{\color{black}}\expandafter\def\csname LT0\endcsname{\color[rgb]{1,0,0}}\expandafter\def\csname LT1\endcsname{\color[rgb]{0,1,0}}\expandafter\def\csname LT2\endcsname{\color[rgb]{0,0,1}}\expandafter\def\csname LT3\endcsname{\color[rgb]{1,0,1}}\expandafter\def\csname LT4\endcsname{\color[rgb]{0,1,1}}\expandafter\def\csname LT5\endcsname{\color[rgb]{1,1,0}}\expandafter\def\csname LT6\endcsname{\color[rgb]{0,0,0}}\expandafter\def\csname LT7\endcsname{\color[rgb]{1,0.3,0}}\expandafter\def\csname LT8\endcsname{\color[rgb]{0.5,0.5,0.5}}\else
\def\colorrgb#1{\color{black}}\def\colorgray#1{\color[gray]{#1}}\expandafter\def\csname LTw\endcsname{\color{white}}\expandafter\def\csname LTb\endcsname{\color{black}}\expandafter\def\csname LTa\endcsname{\color{black}}\expandafter\def\csname LT0\endcsname{\color{black}}\expandafter\def\csname LT1\endcsname{\color{black}}\expandafter\def\csname LT2\endcsname{\color{black}}\expandafter\def\csname LT3\endcsname{\color{black}}\expandafter\def\csname LT4\endcsname{\color{black}}\expandafter\def\csname LT5\endcsname{\color{black}}\expandafter\def\csname LT6\endcsname{\color{black}}\expandafter\def\csname LT7\endcsname{\color{black}}\expandafter\def\csname LT8\endcsname{\color{black}}\fi
  \fi
    \setlength{\unitlength}{0.0500bp}\ifx\gptboxheight\undefined \newlength{\gptboxheight}\newlength{\gptboxwidth}\newsavebox{\gptboxtext}\fi \setlength{\fboxrule}{0.5pt}\setlength{\fboxsep}{1pt}\begin{picture}(2540.00,1460.00)\gplgaddtomacro\gplbacktext{}\gplgaddtomacro\gplfronttext{}\gplbacktext
    \put(0,0){\includegraphics[width={127.00bp},height={73.00bp}]{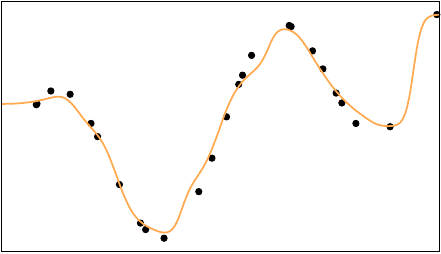}}\gplfronttext
  \end{picture}\endgroup
     \caption{medium support}
  \end{subfigure}
  \begin{subfigure}{0.32\linewidth}
    \centering
    \begingroup
  \makeatletter
  \providecommand\color[2][]{\GenericError{(gnuplot) \space\space\space\@spaces}{Package color not loaded in conjunction with
      terminal option `colourtext'}{See the gnuplot documentation for explanation.}{Either use 'blacktext' in gnuplot or load the package
      color.sty in LaTeX.}\renewcommand\color[2][]{}}\providecommand\includegraphics[2][]{\GenericError{(gnuplot) \space\space\space\@spaces}{Package graphicx or graphics not loaded}{See the gnuplot documentation for explanation.}{The gnuplot epslatex terminal needs graphicx.sty or graphics.sty.}\renewcommand\includegraphics[2][]{}}\providecommand\rotatebox[2]{#2}\@ifundefined{ifGPcolor}{\newif\ifGPcolor
    \GPcolortrue
  }{}\@ifundefined{ifGPblacktext}{\newif\ifGPblacktext
    \GPblacktexttrue
  }{}\let\gplgaddtomacro\g@addto@macro
\gdef\gplbacktext{}\gdef\gplfronttext{}\makeatother
  \ifGPblacktext
\def\colorrgb#1{}\def\colorgray#1{}\else
\ifGPcolor
      \def\colorrgb#1{\color[rgb]{#1}}\def\colorgray#1{\color[gray]{#1}}\expandafter\def\csname LTw\endcsname{\color{white}}\expandafter\def\csname LTb\endcsname{\color{black}}\expandafter\def\csname LTa\endcsname{\color{black}}\expandafter\def\csname LT0\endcsname{\color[rgb]{1,0,0}}\expandafter\def\csname LT1\endcsname{\color[rgb]{0,1,0}}\expandafter\def\csname LT2\endcsname{\color[rgb]{0,0,1}}\expandafter\def\csname LT3\endcsname{\color[rgb]{1,0,1}}\expandafter\def\csname LT4\endcsname{\color[rgb]{0,1,1}}\expandafter\def\csname LT5\endcsname{\color[rgb]{1,1,0}}\expandafter\def\csname LT6\endcsname{\color[rgb]{0,0,0}}\expandafter\def\csname LT7\endcsname{\color[rgb]{1,0.3,0}}\expandafter\def\csname LT8\endcsname{\color[rgb]{0.5,0.5,0.5}}\else
\def\colorrgb#1{\color{black}}\def\colorgray#1{\color[gray]{#1}}\expandafter\def\csname LTw\endcsname{\color{white}}\expandafter\def\csname LTb\endcsname{\color{black}}\expandafter\def\csname LTa\endcsname{\color{black}}\expandafter\def\csname LT0\endcsname{\color{black}}\expandafter\def\csname LT1\endcsname{\color{black}}\expandafter\def\csname LT2\endcsname{\color{black}}\expandafter\def\csname LT3\endcsname{\color{black}}\expandafter\def\csname LT4\endcsname{\color{black}}\expandafter\def\csname LT5\endcsname{\color{black}}\expandafter\def\csname LT6\endcsname{\color{black}}\expandafter\def\csname LT7\endcsname{\color{black}}\expandafter\def\csname LT8\endcsname{\color{black}}\fi
  \fi
    \setlength{\unitlength}{0.0500bp}\ifx\gptboxheight\undefined \newlength{\gptboxheight}\newlength{\gptboxwidth}\newsavebox{\gptboxtext}\fi \setlength{\fboxrule}{0.5pt}\setlength{\fboxsep}{1pt}\begin{picture}(2540.00,1460.00)\gplgaddtomacro\gplbacktext{}\gplgaddtomacro\gplfronttext{}\gplbacktext
    \put(0,0){\includegraphics[width={127.00bp},height={73.00bp}]{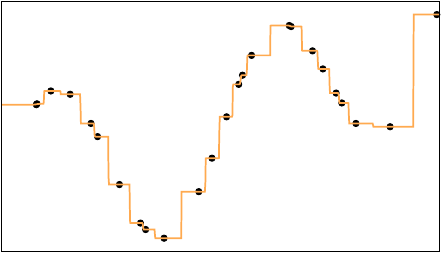}}\gplfronttext
  \end{picture}\endgroup
     \caption{narrow support}
  \end{subfigure}
  \caption{Shepard's model for different widths of the kernel support}\label{fig:example}
\end{figure}

\subsection{Theory} 

For simplicity, we restrict the domain to be the one-dimensional torus $\Omega = \mathds T$ and $Y = \mathds R$.
A common assumption on which we rely is to use positive, radial kernels, i.e.
\begin{align*}
  K_h(x, x')
  = k_h(d(x, x'))
\end{align*}
for $d(\cdot,\cdot)$ being the usual periodic distance on $\mathds T$ and $k_h\colon [0,\infty)\to[0,\infty)$ a family of kernel functions with local support, i.e.,
\begin{align*}
  \supp k_h
  = \overline{\{t\in[0,\infty) : k_h(t) \neq 0\}}
  = [0, 1/h].
\end{align*}

Note, that the range of the function $R_h(\bm z)$ is contained within the convex hull of all $f(x_i)$.
Therefore, for samples $\bm z$ from a bounded function $f\colon\mathds T\to \mathds R$, we have
\begin{align}\label{eq:hui}
  M
  = \sup_{x_1,\dots,x_n\in\Omega} \|R_h((x_i, f(x_i))_{i=1}^n)\|_\infty
  \le \|f\|_\infty.
\end{align}

Deterministic bounds on the approximation error are given in \cite[Chapter 25]{Fa07}.
These are based on the \emph{mesh norm}
\begin{align*}
  \delta_{\{x_1,\dots,x_n\}}
  \coloneqq \max_{x\in\mathds T} \min_{i=1, \dots, n} d(x, x_i).
\end{align*}
For simplicity, we shall use only a simple bound which relies on stronger assumptions compared to \cite[Chapter 25]{Fa07}.
However, this still attains the same order in terms of the mesh norm.

\begin{lemma}\label{lemma:mlsc1} Let $k_h$ be supported on $[0,1/h]$ and $f$ be Lipschitz continuous with constant $L$.
  Furthermore, we assume $ \delta_{\{x_1, \dots, x_n\}} < 1/h $.
  Then
  \begin{align*}
    \|R_h(\bm z)-f\|_\infty
    \le \frac Lh.
  \end{align*}
\end{lemma}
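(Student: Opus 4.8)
The plan is to prove the uniform bound on $\|R_h(\bm z)-f\|_\infty$ pointwise in $x\in\mathds T$ by exploiting the convex-combination (partition-of-unity) structure of Shepard's estimator together with the local support of the kernel. First I would fix an arbitrary $x\in\mathds T$ and write
\begin{align*}
  R_h(\bm z)(x) - f(x)
  = \frac{\sum_{i=1}^n K_h(x,x_i)\,f(x_i)}{\sum_{i=1}^n K_h(x,x_i)} - f(x)
  = \frac{\sum_{i=1}^n K_h(x,x_i)\,(f(x_i)-f(x))}{\sum_{i=1}^n K_h(x,x_i)},
\end{align*}
using that the weights $w_i(x) = K_h(x,x_i)/\sum_j K_h(x,x_j)$ sum to one. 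The denominator is nonzero: since $\delta_{\{x_1,\dots,x_n\}} < 1/h$, there is at least one node $x_i$ with $d(x,x_i) < 1/h$, hence $K_h(x,x_i) = k_h(d(x,x_i)) > 0$ on the interior of the support, so the estimator is well defined at $x$.

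Next I would bound each term $|f(x_i)-f(x)|$ in the numerator. The key observation is that $K_h(x,x_i) = k_h(d(x,x_i))$ vanishes whenever $d(x,x_i) \ge 1/h$, so only nodes with $d(x,x_i) < 1/h$ contribute to the sum. For those nodes, Lipschitz continuity of $f$ gives $|f(x_i)-f(x)| \le L\,d(x,x_i) < L/h$. Therefore
\begin{align*}
  |R_h(\bm z)(x) - f(x)|
  \le \sum_{i=1}^n w_i(x)\,|f(x_i)-f(x)|
  \le \sum_{i=1}^n w_i(x)\,\frac{L}{h}
  = \frac{L}{h},
\end{align*}
where in the middle step the bound $|f(x_i)-f(x)| < L/h$ is applied only on the indices where $w_i(x) \neq 0$ and is harmless on the others. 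Taking the supremum over $x\in\mathds T$ yields the claim.

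There is no real obstacle here; the only point requiring a little care is the well-definedness of the quotient, i.e.\ ruling out a zero denominator, which is exactly where the hypothesis $\delta_{\{x_1,\dots,x_n\}} < 1/h$ is used — it guarantees every evaluation point lies strictly within distance $1/h$ of some node, so at least one kernel weight is positive. One might also note that the convex-hull remark preceding the lemma (equation~\eqref{eq:hui}) is the same partition-of-unity fact specialized to $f(x_i)$ in place of $f(x_i)-f(x)$; reusing that observation keeps the argument short. A minor subtlety worth a sentence is whether the closed support $[0,1/h]$ versus the open condition $d(x,x_i)<1/h$ matters: since we only need strict positivity of the denominator and the Lipschitz estimate is an equality-allowed inequality $|f(x_i)-f(x)|\le L\,d(x,x_i)\le L/h$, the boundary case $d(x,x_i)=1/h$ causes no trouble either way.
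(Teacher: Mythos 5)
Your proof is correct and takes essentially the same route as the paper: both rewrite the pointwise error as the kernel-weighted average of $f(x_i)-f(x)$ over nodes within the support radius $1/h$, use the mesh-norm hypothesis to ensure the denominator $\sum_i K_h(x,x_i)$ is positive, and conclude via the Lipschitz bound $|f(x_i)-f(x)|\le L\,d(x,x_i)\le L/h$. Your additional remarks on well-definedness and the boundary case $d(x,x_i)=1/h$ are harmless refinements of the same argument.
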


\begin{proof} By the assumption on the mesh norm and the support of $K_h$ we have
  \begin{align*}
    \sum_{i=1}^n K_h(x, x_i) > 0
  \end{align*}
  for all $x\in\mathds T$.
  Thus, we will not divide by zero in the following estimate.
  By the definition of Shepard's method we have
  \begin{align*}
    |R_h(\bm z)(x)-f(x)|
    &= \left| \frac{\sum_{i=1}^m K_h(x,x_i)f(x_i)}{\sum_{i=1}^m K_h(x, x_i)} - f(x) \right| \\
    &\le \frac{\sum_{i=1}^m K_h(x,x_i)|f(x_i)-f(x)|}{\sum_{i=1}^m K_h(x, x_i)}.
  \end{align*}
  Using the Lipschitz condition and the local support we obtain
  \begin{align*}
    |R_h(\bm z)(x)-f(x)|
    &\le L \frac{\sum_{x_i\in[x-1/h,x+1/h]} K_h(x,x_i)|x_i-x|}{\sum_{x_i\in[x-1/h,x+1/h]} K_h(x, x_i)} \\
    &\le \frac Lh \frac{\sum_{x_i\in[x-1/h,x+1/h]} K_h(x,x_i)}{\sum_{x_i\in[x-1/h,x+1/h]} K_h(x, x_i)}
    = \frac Lh.
  \end{align*}
\end{proof}

As we draw samples randomly, we cannot guarantee an upper bound on the mesh norm $\delta_{\{x_1,\dots,x_n\}}$, but aim for a probabilistic result.
Furthermore, in order to bound the approximation errors $C_1$ from Definition~\ref{definition:alice} we actually need a bound for the mesh norms where single nodes are secluded, i.e., for $\delta_{\{x_1,\dots,x_{i-1},x_{i+1},\dots,x_n\}}$ and $1\le i\le n$.
To this end we define
\begin{align}\label{eq:Xi}
  \Xi
  = \left\{ (x_i, f(x_i))_{i=1}^n:
    \delta_{\{x_1, \dots, x_{i-1}, x_{i+1}, \dots, x_n\}} < 1/h \quad\text{for}\quad 1\le i \le n
    \right\}.
\end{align}
By the previous lemma we know, that for samples in $\Xi$ the reconstruction error is bounded by $L/h = C_1$.
With the following lemma we will show that the constructed set is in the paradigm of Definition~\ref{definition:alice} and $\gamma = 1-\mathds P\{\bm z\in\Xi\}$ is close to zero.

\begin{lemma}\label{lemma:biggesthole} For $x_1, \dots, x_n\in\mathds T$ drawn uniformly at random, we have
  \begin{align*}
    \mathds P\left\{
      \exists\,1\le i\le n :
      \delta_{\{x_1, \dots, x_{i-1}, x_{i+1}, \dots, x_n\}} > \tfrac 1h
    \right\}
    \le \sum_{k=1}^{\lfloor h\rfloor} (-1)^{k+1} {n\choose k}\left(1-\frac{k}{2h}\right)^{n-1}.
  \end{align*}
\end{lemma}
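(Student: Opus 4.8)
The plan is to reduce the event ``some secluded mesh norm exceeds $1/h$'' to a covering-type event for the full point set and then apply inclusion–exclusion. First I would observe that if $\delta_{\{x_1,\dots,x_{i-1},x_{i+1},\dots,x_n\}} > 1/h$ for some $i$, then there is a point $x\in\mathds T$ whose distance to every $x_j$ with $j\neq i$ exceeds $1/h$; equivalently, the open arc $(x-1/h,\,x+1/h)$ of length $2/h$ contains at most the single node $x_i$ (and possibly none). Taking a union over $i$, the event in question is contained in the event that \emph{there exists an arc of length $2/h$ containing at most one of the $n$ nodes}. Conversely any such configuration is easily seen to force one of the secluded mesh norms to be large, but for the upper bound only the inclusion is needed.

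Next I would bound the probability of ``some arc of length $2/h$ is (almost) empty''. The cleanest route is to discretise: it suffices to control, by a union bound over a suitable finite family of arcs, the probability that a fixed arc of length $2/h$ contains at most one node. For uniformly distributed nodes, the number of nodes falling into a fixed arc of length $\ell = 2/h$ is $\mathrm{Bin}(n,\ell)$ (with $\ell$ measured as a fraction of the torus of circumference $1$), so $\mathds P\{\text{at most one node in a fixed such arc}\} = (1-\ell)^n + n\ell(1-\ell)^{n-1}$. The sharper way, which actually produces the stated alternating sum, is to apply the Bonferroni/inclusion–exclusion identity directly: index the relevant arcs by the nodes themselves (the arc ``centered so that $x_i$ is its only possible occupant''), let $A_i$ be the event that all other nodes avoid that arc, and compute $\mathds P\{\bigcup_i A_i\} = \sum_{k\ge 1} (-1)^{k+1} \sum_{|S|=k} \mathds P\{\bigcap_{i\in S} A_i\}$. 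For a $k$-subset $S$, the intersection $\bigcap_{i\in S}A_i$ asks that the remaining $n-k$ free nodes avoid $k$ arcs and that those $k$ arcs be pairwise compatible; a geometric argument on the torus shows the probability is $\binom{n}{k}^{-1}$-symmetric and evaluates to $\left(1-\tfrac{k}{2h}\right)^{n-1}$ after accounting for the conditioning that makes the arcs disjoint and the combinatorial factor $\binom{n}{k}$, with the sum truncating at $k=\lfloor h\rfloor$ because once $k > h$ the total arc length $k\cdot(1/h)$ (half-arcs) exceeds the circumference and the intersection is empty.

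The main obstacle I anticipate is precisely this last computation of $\mathds P\{\bigcap_{i\in S}A_i\}$ and justifying both the exponent $n-1$ (rather than $n-k$) and the truncation at $\lfloor h\rfloor$. The exponent $n-1$ is the telltale sign that the argument is not a naive union bound over independent nodes but rather exploits a symmetry: one should fix one node, say $x_1$, condition on it, and view the other $n-1$ nodes on the resulting arc of the torus — this is the same device used in Lemma~\ref{lemma:connection}, where $\bm Z_{-i}$ is treated as $n-1$ i.i.d.\ samples. Concretely, I would (i) parametrise gap configurations by the spacings between consecutive order statistics of $x_1,\dots,x_n$, (ii) recognise that ``$\delta_{\{x_1,\dots,\widehat{x_i},\dots,x_n\}} > 1/h$'' means two consecutive spacings straddling $x_i$ sum to more than $2/h$, (iii) translate the union over $i$ and the inclusion–exclusion over $S$ into a statement about $k$ disjoint ``forbidden'' arcs each of length $1/h$ that the remaining mass must avoid, and (iv) invoke the standard fact that the probability that $n-1$ uniform points on the circle avoid $k$ disjoint arcs of total length $k/(2h)$ is $\left(1-\tfrac{k}{2h}\right)^{n-1}_+$, which is nonzero exactly when $k < 2h$, but the combinatorial weight $\binom{n}{k}$ together with the geometric constraint that the $k$ arcs (each of length $1/h$, but here effectively paired half-arcs) fit disjointly forces the stricter cutoff $k \le \lfloor h\rfloor$. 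Assembling these pieces and applying Bonferroni's inequality — which gives an upper bound by the partial alternating sum — yields the claimed estimate.
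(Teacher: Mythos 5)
Your opening reduction is fine: if some secluded mesh norm exceeds $1/h$, then there is an arc of length $2/h$ containing at most one node, and this is essentially equivalent to the original event. The genuine gap is exactly the step you flag yourself: the evaluation of the inclusion--exclusion terms. Your events $A_i$ are not well defined (there is no canonical arc ``centered so that $x_i$ is its only possible occupant''), and the claimed value $\mathds P\{\bigcap_{i\in S}A_i\}=(1-\tfrac{k}{2h})^{n-1}$ is asserted rather than derived. Moreover your own accounting is inconsistent: if the forbidden regions are arcs of length $2/h$ (one per selected node), $k$ disjoint such arcs have total length $k/h$, which would give an avoidance probability of the form $(1-\tfrac{k}{h})^{n-1}$ and vanishing terms only for $k>h$; the factor $(1-\tfrac{k}{2h})^{n-1}$ instead corresponds to $k$ prescribed \emph{spacings} exceeding $1/(2h)$. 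You cannot have both the $1/(2h)$ in the summand and the cutoff at $\lfloor h\rfloor$ from ``total length $k/h>1$''. The events you actually need concern sums of two adjacent spacings (overlapping pairs share a spacing), so their joint law does not have the clean exchangeable product form you invoke, and ``conditioning that makes the arcs disjoint'' is itself a nontrivial event, not a bookkeeping device. Finally, truncating an inclusion--exclusion identity at $k=\lfloor h\rfloor$ is not automatically an upper bound: Bonferroni gives an upper bound only when you stop after an odd number of terms, and $\lfloor h\rfloor$ has no reason to be odd, so you would need the terms beyond the cutoff to vanish --- which, under your $(1-\tfrac{k}{2h})$ accounting, they do not.

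The paper avoids all of this with a one-line containment plus a citation: if every spacing of the \emph{full} set is small (mesh norm $\delta_{\{x_1,\dots,x_n\}}\le 1/(2h)$), then deleting any single node merges only two adjacent gaps, so every secluded mesh norm is at most $1/h$; hence the event in the lemma is contained in $\{\delta_{\{x_1,\dots,x_n\}}>1/(2h)\}$, and the probability of that event is the classical maximal-spacing distribution computed exactly in Holst \cite[Theorem~2.1]{Holst80}, which is precisely the stated alternating sum. Your instincts about spacings, the exponent $n-1$, and fixing one node are exactly the ingredients of the classical proof of that spacing law, but on your route you would in effect have to reprove it, and the crucial computation is missing (and, as sketched, internally inconsistent). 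Either carry out that derivation carefully for the correct family of anchored arcs, or do what the paper does: reduce to the full-set maximal spacing and quote the known formula.
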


\begin{proof}
  The given event on the mesh norm is equivalent to saying the distance of $x_i$ to $x_{i+2}$ will not exceed $1/h$.
  This is certainly fulfilled for nodes where the distance of $x_i$ to $x_{i+1}$ will not exceed $1/(2h)$.
  Therefore,
  \begin{align*}
    \mathds P\left\{
      \exists\,1\le i\le n :
      \delta_{\{x_1, \dots, x_{i-1}, x_{i+1}, \dots, x_n\}} > \tfrac 1h
    \right\}
    \le \mathds P\left\{
      \delta_{\{x_1, \dots, x_n\}} > \tfrac{1}{2h}
    \right\}.
  \end{align*}
  This probability has been calculated in \cite[Theorem~2.1]{Holst80} which gives the assertion.
\end{proof}

\begin{remark}\label{remark:asd}
  \begin{enumerate}[(i)]
  \item
    Note that similar techniques, involving $\varepsilon$-nets, can be applied to obtain results for more general domains, cf.\,\cite{GLPT07}.
  \item
    Figure~\ref{fig:meshnorm} depicts the probability of all mesh norms $\delta_{\{x_1,\dots,x_{i-1},x_{i+1},\dots,x_n\}}$, $1\le i\le n$ being bigger than $1/h$ for $n = 10\,000$ nodes estimated from numerical experiments.
    The critical point is around $1\,000$, where the probability increases away from zero.
    The theoretical bound from Lemma~\ref{lemma:biggesthole} is not optimal and has its critical point around $700$.
  \item
    The binomial bound in Lemma~\ref{lemma:biggesthole} is difficult to evaluate.
    In \cite{Devroye81} it was show that for $n\to\infty$ it converges to the Gumbel distribution, i.e.,
    \begin{align*}
      \sum_{k=1}^{\lfloor h\rfloor} (-1)^{k+1} {n\choose k}\left(1-\frac{k}{2h}\right)^{n-1}
      \to 1-\exp\left(-n\exp\left(-\frac{n}{2h}\right)\right).
    \end{align*}
    In Figure~\ref{fig:meshnorm} we see that, already for $10\,000$ nodes, we are very close to this Gumbel distribution.
  \end{enumerate}
\end{remark}

\begin{figure}
  \centering
  \begingroup
  \makeatletter
  \providecommand\color[2][]{\GenericError{(gnuplot) \space\space\space\@spaces}{Package color not loaded in conjunction with
      terminal option `colourtext'}{See the gnuplot documentation for explanation.}{Either use 'blacktext' in gnuplot or load the package
      color.sty in LaTeX.}\renewcommand\color[2][]{}}\providecommand\includegraphics[2][]{\GenericError{(gnuplot) \space\space\space\@spaces}{Package graphicx or graphics not loaded}{See the gnuplot documentation for explanation.}{The gnuplot epslatex terminal needs graphicx.sty or graphics.sty.}\renewcommand\includegraphics[2][]{}}\providecommand\rotatebox[2]{#2}\@ifundefined{ifGPcolor}{\newif\ifGPcolor
    \GPcolortrue
  }{}\@ifundefined{ifGPblacktext}{\newif\ifGPblacktext
    \GPblacktexttrue
  }{}\let\gplgaddtomacro\g@addto@macro
\gdef\gplbacktext{}\gdef\gplfronttext{}\makeatother
  \ifGPblacktext
\def\colorrgb#1{}\def\colorgray#1{}\else
\ifGPcolor
      \def\colorrgb#1{\color[rgb]{#1}}\def\colorgray#1{\color[gray]{#1}}\expandafter\def\csname LTw\endcsname{\color{white}}\expandafter\def\csname LTb\endcsname{\color{black}}\expandafter\def\csname LTa\endcsname{\color{black}}\expandafter\def\csname LT0\endcsname{\color[rgb]{1,0,0}}\expandafter\def\csname LT1\endcsname{\color[rgb]{0,1,0}}\expandafter\def\csname LT2\endcsname{\color[rgb]{0,0,1}}\expandafter\def\csname LT3\endcsname{\color[rgb]{1,0,1}}\expandafter\def\csname LT4\endcsname{\color[rgb]{0,1,1}}\expandafter\def\csname LT5\endcsname{\color[rgb]{1,1,0}}\expandafter\def\csname LT6\endcsname{\color[rgb]{0,0,0}}\expandafter\def\csname LT7\endcsname{\color[rgb]{1,0.3,0}}\expandafter\def\csname LT8\endcsname{\color[rgb]{0.5,0.5,0.5}}\else
\def\colorrgb#1{\color{black}}\def\colorgray#1{\color[gray]{#1}}\expandafter\def\csname LTw\endcsname{\color{white}}\expandafter\def\csname LTb\endcsname{\color{black}}\expandafter\def\csname LTa\endcsname{\color{black}}\expandafter\def\csname LT0\endcsname{\color{black}}\expandafter\def\csname LT1\endcsname{\color{black}}\expandafter\def\csname LT2\endcsname{\color{black}}\expandafter\def\csname LT3\endcsname{\color{black}}\expandafter\def\csname LT4\endcsname{\color{black}}\expandafter\def\csname LT5\endcsname{\color{black}}\expandafter\def\csname LT6\endcsname{\color{black}}\expandafter\def\csname LT7\endcsname{\color{black}}\expandafter\def\csname LT8\endcsname{\color{black}}\fi
  \fi
    \setlength{\unitlength}{0.0500bp}\ifx\gptboxheight\undefined \newlength{\gptboxheight}\newlength{\gptboxwidth}\newsavebox{\gptboxtext}\fi \setlength{\fboxrule}{0.5pt}\setlength{\fboxsep}{1pt}\definecolor{tbcol}{rgb}{1,1,1}\begin{picture}(3400.00,2260.00)\gplgaddtomacro\gplbacktext{\csname LTb\endcsname \put(441,617){\makebox(0,0)[r]{\strut{}$0$}}\csname LTb\endcsname \put(441,1332){\makebox(0,0)[r]{\strut{}$0.5$}}\csname LTb\endcsname \put(441,2046){\makebox(0,0)[r]{\strut{}$1$}}\csname LTb\endcsname \put(1047,424){\makebox(0,0){\strut{}$500$}}\csname LTb\endcsname \put(2066,424){\makebox(0,0){\strut{}$1500$}}\csname LTb\endcsname \put(3085,424){\makebox(0,0){\strut{}$2500$}}}\gplgaddtomacro\gplfronttext{\csname LTb\endcsname \put(1812,135){\makebox(0,0){\strut{}$h$}}}\gplbacktext
    \put(0,0){\includegraphics[width={170.00bp},height={113.00bp}]{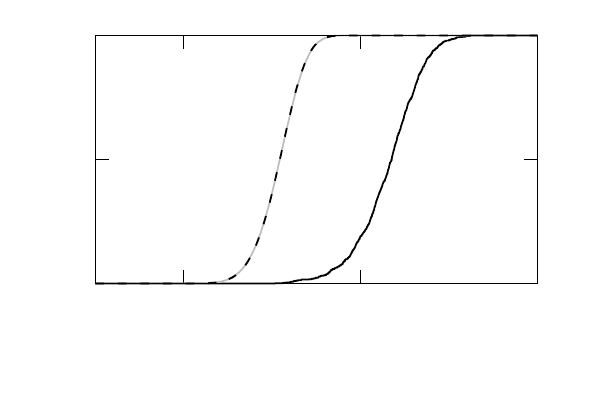}}\gplfronttext
  \end{picture}\endgroup
   \caption{The probability of all mesh norms $\delta_{\{x_1,\dots,x_{i-1},x_{i+1},\dots,x_n\}}$, $1\le i\le n$ being bigger than $1/h$ for $n = 10\,000$ nodes.
    The solid line displays the numerical estimates from $1\,000$ experiments, the dashed line the upper bound from Lemma~\ref{lemma:biggesthole} and the gray line the asymptotic behaviour from Remark~\ref{remark:asd}~(ii).
    }
  \label{fig:meshnorm}
\end{figure}

Now we have the necessary constants: the bound on the reconstruction $M$ and the uniform bound on the reconstruction error $C_1$ with its fail probability $\gamma$ and are able to use the machinery of Section~\ref{sec:framework} to concentrate the difference of risk functional and cross-validation score.

\begin{theorem}\label{theorem:shepard} Let $\bm Z = ((X_1, f(X_1)), \dots, (X_n, f(X_n))$ represent $n$ samples from a function $f\colon \mathds T\to\mathds R$ with Lipschitz constant $L$, and $R_h(\bm Z)$ the reconstruction via Shepard's model, defined by \eqref{eq:dendermann}, where the kernel $k_h$ is supported on $[0, 1/h]$.
  Further, let
  \begin{align*}
    \gamma
    = \sum_{k=1}^{\lfloor h\rfloor} (-1)^{k+1} {n\choose k}\left(1-\frac{k}{2h}\right)^{n-1}
    \quad\text{and}\quad
    \varepsilon > 2\gamma\max\{(4n+1)L^2/h^2, 4\|f\|_\infty^2\}.
  \end{align*}
  Then we have the concentration bound of the difference of cross-validation score $\CV(\bm Z, h)$ and risk functional $\mathcal E(R_h(\bm Z))$
  \begin{align*}
    \mathds P\left\{
      \left| \CV(\bm Z, h) - \mathcal E(R_h(\bm Z_{-1})) \right|
      > \varepsilon \right\}
    \le 2\gamma
    +2\exp\left(-\left(
      \frac{h^2\varepsilon}{12\sqrt n L^2}-\sqrt{2n}\gamma
    \right)^2\right).
  \end{align*}
  In particular for $\delta>0$ we have with probability larger than $1-2(\gamma+\delta)$
  \begin{align*}
    |\CV(\bm Z, h) - \mathcal E(R_h(\bm Z_{-1}))|
    \le \max\left\{ 4\gamma\|f\|_\infty^2, \frac{12\sqrt{n} L^2}{h^2}\left(\sqrt{2n}\gamma+\sqrt{-\log \delta}\right)\right\}
  \end{align*}
\end{theorem}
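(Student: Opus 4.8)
The plan is to verify that Shepard's model satisfies all the hypotheses of Theorem~\ref{theorem:connect} and then to substitute the explicit constants. First I would fix the constants of Definition~\ref{definition:alice}: on the set $\Xi$ from \eqref{eq:Xi}, Lemma~\ref{lemma:mlsc1} (applied with the nodes $x_1,\dots,x_{i-1},x_{i+1},\dots,x_n$, whose mesh norm is below $1/h$ by the defining property of $\Xi$) yields $\|R_h(\bm z_{-i})-f\|_\infty < L/h$, so we may take $C_1 = L/h$. For the $C_2$-boundedness required in (ii) I would simply invoke Remark~\ref{remark:c1c2}~(i) and set $C_2 = 2C_1 = 2L/h$; this is the choice that makes all the simplified bounds in Section~\ref{sec:framework} applicable. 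The uniform bound $M$ on the reconstruction is supplied by \eqref{eq:hui}, namely $M \le \|f\|_\infty$, since the range of $R_h(\bm z)$ lies in the convex hull of the sampled values. The fail probability is exactly $\gamma = 1-\mathds P\{\bm Z\in\Xi\}$, and Lemma~\ref{lemma:biggesthole} gives the stated closed-form upper bound for it in terms of the binomial sum; I would observe that this quantity is precisely the $\gamma$ appearing in the theorem's hypothesis.

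Next I would feed these values into Theorem~\ref{theorem:connect}. The admissibility condition on $\varepsilon$ there reads $\varepsilon > 2\gamma\max\{4nC_1C_2+C_1^2,\,(M+\|f\|_\infty)^2\}$. Substituting $C_1 = L/h$, $C_2 = 2L/h$ gives $4nC_1C_2 + C_1^2 = (8n+1)L^2/h^2$, and $M+\|f\|_\infty \le 2\|f\|_\infty$ gives $(M+\|f\|_\infty)^2 \le 4\|f\|_\infty^2$. So the hypothesis $\varepsilon > 2\gamma\max\{(8n+1)L^2/h^2,\,4\|f\|_\infty^2\}$ suffices; this is slightly stronger than the $(4n+1)$ written in the statement, so I would reconcile the two — either the theorem statement intends the bound with $C_2 = C_1$ plugged directly into the $C_1^2/n + 2C_2$ form (giving a smaller prefactor), or the $(4n+1)$ should read $(8n+1)$; in any case the cleanest route is to use the second, simplified inequality of Theorem~\ref{theorem:connect}, whose prefactor $12\sqrt{n}C_1^2$ becomes $12\sqrt{n}L^2/h^2$ upon substitution, valid for $n\ge 3$.

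The concentration bound of Theorem~\ref{theorem:connect} in its simplified form is
\[
  \mathds P\{|\CV(\bm Z,h)-\mathcal E(R_h(\bm Z_{-1}))| > \varepsilon\}
  \le 2\gamma + 2\exp\left(-\left(\frac{\varepsilon}{12\sqrt{n}C_1^2} - \sqrt{2n}\gamma\right)^2\right),
\]
and replacing $C_1^2 = L^2/h^2$ yields exactly the displayed inequality with $h^2\varepsilon/(12\sqrt{n}L^2)$ in the exponent. For the "in particular" statement I would proceed as in the corresponding part of Theorem~\ref{theorem:connect}: set the exponential term equal to $2\delta$, i.e. solve $\exp(-(\,\cdot\,)^2) = \delta$ so that the bracket equals $\sqrt{-\log\delta}$, giving $\varepsilon = 12\sqrt{n}(L^2/h^2)(\sqrt{2n}\gamma + \sqrt{-\log\delta})$; the max with $4\gamma\|f\|_\infty^2 = 2\gamma(M+\|f\|_\infty)^2$ (using $M\le\|f\|_\infty$) comes from the admissibility threshold on $\varepsilon$, and the total fail probability is $2\gamma + 2\delta = 2(\gamma+\delta)$, which is what is claimed.

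I do not expect any genuine obstacle here — the theorem is a corollary of Theorem~\ref{theorem:connect} together with Lemmata~\ref{lemma:mlsc1} and \ref{lemma:biggesthole} and the bound \eqref{eq:hui}. The only point demanding a little care is the bookkeeping of the constant in the $\varepsilon$-threshold: one must be consistent about whether $C_2$ is carried symbolically or immediately replaced by $2C_1$, since the $4n$ versus $8n$ discrepancy in the $\max$ originates there. I would state explicitly which substitution is being made ($C_2 = 2L/h$) so that the arithmetic $4nC_1C_2 + C_1^2 = (8n+1)L^2/h^2$ is transparent, and if the tighter $(4n+1)$ is desired, use instead the first, unsimplified inequality of Theorem~\ref{theorem:connect} with the prefactor $\sqrt{2}C_1(C_1/\sqrt{n}+4\sqrt{n}C_2)$ and optimize the constant by hand — but for the version as stated, the simplified route above is the shortest.
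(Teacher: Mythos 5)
Your proposal is correct and takes essentially the same route as the paper's proof, which simply sets $M\le\|f\|_\infty$ via \eqref{eq:hui}, $C_1\le L/h$ via Lemma~\ref{lemma:mlsc1} on the set $\Xi$ from \eqref{eq:Xi}, bounds $\gamma$ via Lemma~\ref{lemma:biggesthole}, and plugs these constants into Theorem~\ref{theorem:connect}. The constant bookkeeping you flag is a real wrinkle in the paper itself rather than in your argument: with $C_2=2C_1$ the admissibility threshold coming from Theorem~\ref{theorem:connect} is $2\gamma\max\{(8n+1)L^2/h^2,\,4\|f\|_\infty^2\}$ (and the first term of the ``in particular'' bound is $2\gamma(M+\|f\|_\infty)^2\le 8\gamma\|f\|_\infty^2$, not $4\gamma\|f\|_\infty^2$), so your suggested reconciliation is the appropriate reading.
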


\begin{proof} By equation~\eqref{eq:hui} we have $M \le \|f\|_\infty$.
  With $\Xi$ as in \eqref{eq:Xi} we have by Lemmata~\ref{lemma:mlsc1} and \ref{lemma:biggesthole}
  \begin{align*}
    C_1 \le \frac Lh
    \quad\text{and}\quad
    \gamma \le \sum_{k=1}^{\lfloor h\rfloor} (-1)^{k+1} {n\choose k}\left(1-\frac{k}{2h}\right)^{n-1}.
  \end{align*}
  Using these constants in Theorem~\ref{theorem:connect} gives the assertion.
\end{proof}

\begin{remark}
  In order to interpret the error bounds in Theorem~\ref{theorem:shepard} asymptotically for $n\to\infty$ we have to fix the desired probability $\delta$.
  Furthermore, we relate the kernel support $1/h$ and the number of samples $n$ via $h = \alpha\cdot n$.
  By Lemma~\ref{lemma:biggesthole} and Remark~\ref{remark:asd} we approximate the fail probability by 
  \begin{align*}
    \gamma\lessapprox\exp(-\mathrm e^{-1/\alpha} n).
  \end{align*}
  Inserting this bound into Theorem~\ref{theorem:shepard}, we obtain with probability $1-2(\exp(-\mathrm e^{-1/\alpha} n)+\delta)$ that
  \begin{align*}
    |\CV(\bm Z, h) - \mathcal E(R_h(\bm Z_{-1}))|
    \sim \max\left\{ \exp(-\mathrm e^{-1/\alpha}n), \frac{\exp(-\mathrm e^{-1/\alpha}n)}{n}+n^{-3/2}\right\}
    \lesssim n^{-3/2}.
  \end{align*}
\end{remark}

\begin{remark}
  The trade off between the constants $C_1, C_2$, and the fail probability $\gamma$ is controlled by the construction of $\Xi$.
  In general, a larger set $\Xi$ leads to a smaller fail probability $\gamma$ but worse constants $C_1$ and $C_2$.

  In the extreme case we have $\gamma = 0$ and $\Xi$ consists of all possible data realizations, i.e., $\Xi = \{(x_i, f(x_i))_{i=1}^n : x_1, \dots, x_n \in\Omega\}$.
  Then we have the bound $C_1 = 2\|f\|_\infty$ as in equation~\eqref{eq:hui}.
  For the specific case of binary kernels, the estimate $C_2 \sim 1/n$ can be found in \cite[page 118]{GKKH02} (with slight adaptions, as there is an individual $C_2$ for every node $x_i$ and one more assumption).
  With that, analogously to Theorem~\ref{theorem:shepard}, we obtain with probability $1-2\delta$
  \begin{align*}
    |\CV(\bm Z, h) - \mathcal E(R_h(\bm Z_{-1}))|
    \sim \max\left\{ 0, 0+n^{-1/2}\right\}
    \lesssim n^{-1/2}.
  \end{align*}
  So, ignoring the restriction to binary kernels, the cost of improving to $\gamma = 0$ is loosing one order in $n$.
  This reasons for the construction of $\Xi$ being a real subset of all possible data realizations.
\end{remark}

\subsection{Implementation} 

Before presenting our numerical experiments in Section~\ref{ss:numerics}, we give a brief discussion on the computational complexity of evaluating the model \eqref{eq:dendermann} as well as computing the cross-validation score $\CV(\bm z, h)$.
Evaluating the model \eqref{eq:dendermann} in nodes $\tilde x_1,\dots,\tilde x_{\tilde n}$ needs two matrix-vector multiplications with
\begin{align*}
  [K_h(x_i, \tilde x_j)]_{i = 1,\dots,\tilde n,\; j = 1,\dots,n}.
\end{align*}
In \cite{FaZh03} a method is proposed to compute \eqref{eq:dendermann} in a fast manner using the nonequispaced fast Fourier transform \cite{nfft3} which works for global kernels.
Since we are dealing with locally supported kernels, we use sparse matrices for an efficient implementation.
To compute the cross-validation score we need to compute $R_h(\bm z_{-i})(x_i)$ for $1\le i \le n$.
To circumvent setting up $n$ models we use the following trick.
For fixed $i$, we obtain
\begin{align*}
  r_i
  \coloneqq R_h(\bm z_{-i}, h)(x_i)
  &= \frac{\sum_{j\in\{1,\dots,n\}\setminus\{i\}}K_h(x_j, x_i)f(x_j)}{\sum_{j\in\{1,\dots,n\}\setminus\{i\}}K_h(x_j, x_i)} \\
  &= \frac{\sum_{j=1}^{n} K_h(x_j, x_i)f(x_j)-k_h(0)f(x_i)}{\sum_{j=1}^{n}K_h(x_j, x_i)-k_h(0)}.
\end{align*}
This favors the Algorithm~\ref{algo:fcv} to compute the cross-validation score.

\begin{algorithm}[ht]
  \medskip

  \textbf{Input:} data $\bm z\in(\mathds T\times \mathds R)^n$
  \medskip

  \textbf{Output:} cross-validation score $\CV(\bm z, h)$
  \medskip

  \begin{algorithmic}[1]
    \FOR{$i=1, \dots, n$}
    \STATE{
      $n_i \leftarrow \sum_{j=1}^n K_h(x_j, x_i)f(x_j)$ \hfill\COMMENT{numerator of Shepard's model}
    }
    \STATE{
      $d_i \leftarrow \sum_{j=1}^n K_h(x_j, x_i)$ \hfill\COMMENT{denominator of Shepard's model}
    }
    \ENDFOR
    \FOR{$i=1, \dots, n$}
      \STATE{
        $r_i = (n_i-k_h(0)f(x_i))/(d_i-k_h(0))$
      }
    \ENDFOR
    \STATE{
      $\CV(\bm z, h) = \frac 1n \sum_{i=1}^n |r_i-f(x_i)|^2$
    }
  \end{algorithmic}
  \caption{Fast cross-validation for Shepard's model}
  \label{algo:fcv}
\end{algorithm}

In terms of complexity we obtain the same as for evaluating the model, namely, two matrix-vector multiplications.

\subsection{Numerics}\label{ss:numerics} 

To exemplify our findings, we present some numerical experiments.
We use the function $f(x) = \sqrt{2}\sin(2\pi x)$ on $\mathds T$ with $\|f\|_{\mathrm L_2(\mathds T)} = 1$, $\|f\|_\infty = \sqrt{2}$, and Lipschitz constant $L = \sqrt 2$.
Further, we choose the simple hat kernel function
\begin{align*}
  k_h(t) = \max\{0, 1-ht\}.
\end{align*}
We then repeat the following experiment $1\,000$ times for $50$ different parameters $h$:
\begin{enumerate}[(i)]
\item
  Choose $n = 10\,000$ uniformly random nodes $x_1, \dots, x_n$.
\item
  Compute function samples $\bm z = (x_i, f(x_i))_{i=1}^n$.
\item
  Compute the reconstruction $R_h(\bm z)$ and approximate the risk $\mathcal E(R_h(\bm z))$ by using evaluations in equispaced nodes.
\item
  Compute the cross-validation score $\CV(\bm z, h)$ via Algorithm~\ref{algo:fcv}.
\end{enumerate}
Figure~\ref{fig:bound} (a) shows the risk $\mathcal E(R_h(\bm z))$ and (b) the cross-validation $\CV(\bm z, h)$ score for every experiment as a single dot.
We observe, that both graphics resemble each other quite nicely.
Both, the risk $\mathcal E(R_h(\bm z))$ and the cross-validation $\CV(\bm z, h)$, increase for small $h$ and become increasingly unstable for $h>1500$ as the support of $K_h$ gets too small.

In order to summarize the statistical behaviour we depicted in Figure~\ref{fig:bound} (c) the corresponding mean values and the intervals where $90\%$ of the outcomes landed with respect the parameter $h$.
The dashed lines depict our concentration bounds from Theorems~\ref{theorem:concentratel2} and \ref{theorem:concentratecv}.
Setting the probability to 0.9, as in the experiment, we obtain the concentration bounds
\begin{align}\label{eq:boundeps}
  \varepsilon
  \le \alpha\frac{L^2}{h^2}\left(\sqrt 2 n \gamma + \sqrt{-n\log\left(\frac p2 - \gamma\right)}\right)
\end{align}
for the risk functional with $\alpha = \sqrt 8$ and the cross-validation score with $\alpha = 3$.
For the fail probability $\gamma$ we used the numerical estimate from Remark~\ref{remark:asd} instead of the theoretical value from Lemma~\ref{lemma:biggesthole}.

Finally, we depicted in Figure~\ref{fig:bound} (d) the $90\%$-quantile of the difference
between the cross-validation score and risk functional.
It illustrates that the risk functional and the cross-validation score coincide very well in the parameter region $200<h<1500$ of interest.
Our main result in Theorem~\ref{theorem:shepard} confirms this by a theoretical bound on this $90\%$-quantile.
The theoretical bound has exactly the form \eqref{eq:boundeps} with $\alpha=12$ and is plotted as a dashed line.

\begin{figure}
  \begin{subfigure}{0.50\linewidth}
    \centering
    \begingroup
  \makeatletter
  \providecommand\color[2][]{\GenericError{(gnuplot) \space\space\space\@spaces}{Package color not loaded in conjunction with
      terminal option `colourtext'}{See the gnuplot documentation for explanation.}{Either use 'blacktext' in gnuplot or load the package
      color.sty in LaTeX.}\renewcommand\color[2][]{}}\providecommand\includegraphics[2][]{\GenericError{(gnuplot) \space\space\space\@spaces}{Package graphicx or graphics not loaded}{See the gnuplot documentation for explanation.}{The gnuplot epslatex terminal needs graphicx.sty or graphics.sty.}\renewcommand\includegraphics[2][]{}}\providecommand\rotatebox[2]{#2}\@ifundefined{ifGPcolor}{\newif\ifGPcolor
    \GPcolortrue
  }{}\@ifundefined{ifGPblacktext}{\newif\ifGPblacktext
    \GPblacktexttrue
  }{}\let\gplgaddtomacro\g@addto@macro
\gdef\gplbacktext{}\gdef\gplfronttext{}\makeatother
  \ifGPblacktext
\def\colorrgb#1{}\def\colorgray#1{}\else
\ifGPcolor
      \def\colorrgb#1{\color[rgb]{#1}}\def\colorgray#1{\color[gray]{#1}}\expandafter\def\csname LTw\endcsname{\color{white}}\expandafter\def\csname LTb\endcsname{\color{black}}\expandafter\def\csname LTa\endcsname{\color{black}}\expandafter\def\csname LT0\endcsname{\color[rgb]{1,0,0}}\expandafter\def\csname LT1\endcsname{\color[rgb]{0,1,0}}\expandafter\def\csname LT2\endcsname{\color[rgb]{0,0,1}}\expandafter\def\csname LT3\endcsname{\color[rgb]{1,0,1}}\expandafter\def\csname LT4\endcsname{\color[rgb]{0,1,1}}\expandafter\def\csname LT5\endcsname{\color[rgb]{1,1,0}}\expandafter\def\csname LT6\endcsname{\color[rgb]{0,0,0}}\expandafter\def\csname LT7\endcsname{\color[rgb]{1,0.3,0}}\expandafter\def\csname LT8\endcsname{\color[rgb]{0.5,0.5,0.5}}\else
\def\colorrgb#1{\color{black}}\def\colorgray#1{\color[gray]{#1}}\expandafter\def\csname LTw\endcsname{\color{white}}\expandafter\def\csname LTb\endcsname{\color{black}}\expandafter\def\csname LTa\endcsname{\color{black}}\expandafter\def\csname LT0\endcsname{\color{black}}\expandafter\def\csname LT1\endcsname{\color{black}}\expandafter\def\csname LT2\endcsname{\color{black}}\expandafter\def\csname LT3\endcsname{\color{black}}\expandafter\def\csname LT4\endcsname{\color{black}}\expandafter\def\csname LT5\endcsname{\color{black}}\expandafter\def\csname LT6\endcsname{\color{black}}\expandafter\def\csname LT7\endcsname{\color{black}}\expandafter\def\csname LT8\endcsname{\color{black}}\fi
  \fi
    \setlength{\unitlength}{0.0500bp}\ifx\gptboxheight\undefined \newlength{\gptboxheight}\newlength{\gptboxwidth}\newsavebox{\gptboxtext}\fi \setlength{\fboxrule}{0.5pt}\setlength{\fboxsep}{1pt}\begin{picture}(4520.00,2820.00)\gplgaddtomacro\gplbacktext{\csname LTb\endcsname \put(616,1138){\makebox(0,0)[r]{\strut{}$10^{-6}$}}\csname LTb\endcsname \put(616,2109){\makebox(0,0)[r]{\strut{}$10^{-4}$}}\csname LTb\endcsname \put(1521,448){\makebox(0,0){\strut{}$500$}}\csname LTb\endcsname \put(3282,448){\makebox(0,0){\strut{}$1500$}}}\gplgaddtomacro\gplfronttext{\csname LTb\endcsname \put(2445,142){\makebox(0,0){\strut{}$h$}}}\gplbacktext
    \put(0,0){\includegraphics[width={226.00bp},height={141.00bp}]{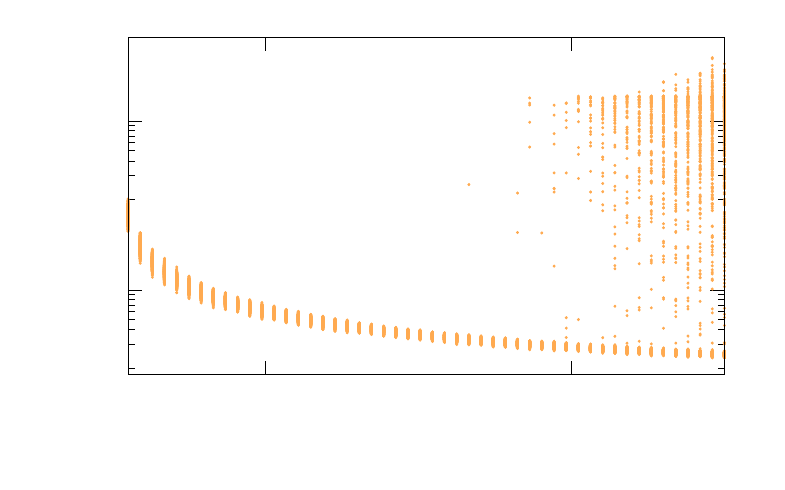}}\gplfronttext
  \end{picture}\endgroup
     \caption{Cross-validation score $\CV(\bm z, h)$ for every experiment.}
  \end{subfigure}
  \begin{subfigure}{0.50\linewidth}
    \centering
    \begingroup
  \makeatletter
  \providecommand\color[2][]{\GenericError{(gnuplot) \space\space\space\@spaces}{Package color not loaded in conjunction with
      terminal option `colourtext'}{See the gnuplot documentation for explanation.}{Either use 'blacktext' in gnuplot or load the package
      color.sty in LaTeX.}\renewcommand\color[2][]{}}\providecommand\includegraphics[2][]{\GenericError{(gnuplot) \space\space\space\@spaces}{Package graphicx or graphics not loaded}{See the gnuplot documentation for explanation.}{The gnuplot epslatex terminal needs graphicx.sty or graphics.sty.}\renewcommand\includegraphics[2][]{}}\providecommand\rotatebox[2]{#2}\@ifundefined{ifGPcolor}{\newif\ifGPcolor
    \GPcolortrue
  }{}\@ifundefined{ifGPblacktext}{\newif\ifGPblacktext
    \GPblacktexttrue
  }{}\let\gplgaddtomacro\g@addto@macro
\gdef\gplbacktext{}\gdef\gplfronttext{}\makeatother
  \ifGPblacktext
\def\colorrgb#1{}\def\colorgray#1{}\else
\ifGPcolor
      \def\colorrgb#1{\color[rgb]{#1}}\def\colorgray#1{\color[gray]{#1}}\expandafter\def\csname LTw\endcsname{\color{white}}\expandafter\def\csname LTb\endcsname{\color{black}}\expandafter\def\csname LTa\endcsname{\color{black}}\expandafter\def\csname LT0\endcsname{\color[rgb]{1,0,0}}\expandafter\def\csname LT1\endcsname{\color[rgb]{0,1,0}}\expandafter\def\csname LT2\endcsname{\color[rgb]{0,0,1}}\expandafter\def\csname LT3\endcsname{\color[rgb]{1,0,1}}\expandafter\def\csname LT4\endcsname{\color[rgb]{0,1,1}}\expandafter\def\csname LT5\endcsname{\color[rgb]{1,1,0}}\expandafter\def\csname LT6\endcsname{\color[rgb]{0,0,0}}\expandafter\def\csname LT7\endcsname{\color[rgb]{1,0.3,0}}\expandafter\def\csname LT8\endcsname{\color[rgb]{0.5,0.5,0.5}}\else
\def\colorrgb#1{\color{black}}\def\colorgray#1{\color[gray]{#1}}\expandafter\def\csname LTw\endcsname{\color{white}}\expandafter\def\csname LTb\endcsname{\color{black}}\expandafter\def\csname LTa\endcsname{\color{black}}\expandafter\def\csname LT0\endcsname{\color{black}}\expandafter\def\csname LT1\endcsname{\color{black}}\expandafter\def\csname LT2\endcsname{\color{black}}\expandafter\def\csname LT3\endcsname{\color{black}}\expandafter\def\csname LT4\endcsname{\color{black}}\expandafter\def\csname LT5\endcsname{\color{black}}\expandafter\def\csname LT6\endcsname{\color{black}}\expandafter\def\csname LT7\endcsname{\color{black}}\expandafter\def\csname LT8\endcsname{\color{black}}\fi
  \fi
    \setlength{\unitlength}{0.0500bp}\ifx\gptboxheight\undefined \newlength{\gptboxheight}\newlength{\gptboxwidth}\newsavebox{\gptboxtext}\fi \setlength{\fboxrule}{0.5pt}\setlength{\fboxsep}{1pt}\begin{picture}(4520.00,2820.00)\gplgaddtomacro\gplbacktext{\csname LTb\endcsname \put(616,1041){\makebox(0,0)[r]{\strut{}$10^{-6}$}}\csname LTb\endcsname \put(616,1818){\makebox(0,0)[r]{\strut{}$10^{-4}$}}\csname LTb\endcsname \put(616,2595){\makebox(0,0)[r]{\strut{}$10^{-2}$}}\csname LTb\endcsname \put(1521,448){\makebox(0,0){\strut{}$500$}}\csname LTb\endcsname \put(3282,448){\makebox(0,0){\strut{}$1500$}}}\gplgaddtomacro\gplfronttext{\csname LTb\endcsname \put(2445,142){\makebox(0,0){\strut{}$h$}}}\gplbacktext
    \put(0,0){\includegraphics[width={226.00bp},height={141.00bp}]{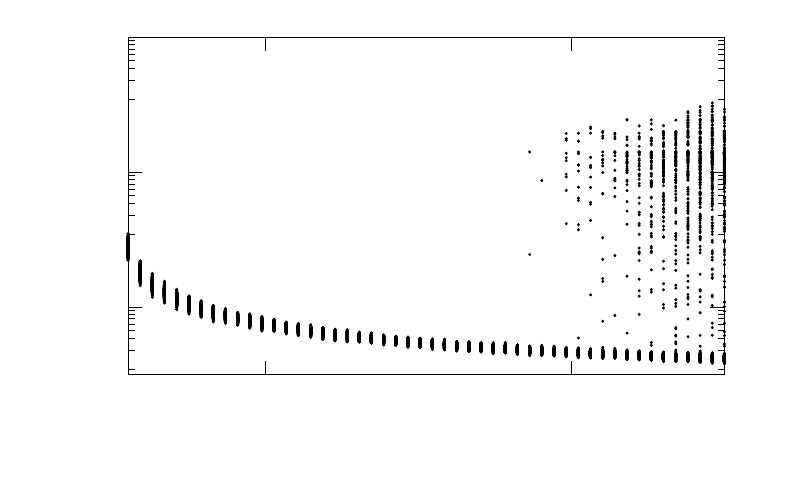}}\gplfronttext
  \end{picture}\endgroup
     \caption{Risk functional $\mathcal E(R_h(\bm z))$ for every experiment.}
  \end{subfigure}
  \begin{subfigure}{0.50\linewidth}
    \centering
    \begingroup
  \makeatletter
  \providecommand\color[2][]{\GenericError{(gnuplot) \space\space\space\@spaces}{Package color not loaded in conjunction with
      terminal option `colourtext'}{See the gnuplot documentation for explanation.}{Either use 'blacktext' in gnuplot or load the package
      color.sty in LaTeX.}\renewcommand\color[2][]{}}\providecommand\includegraphics[2][]{\GenericError{(gnuplot) \space\space\space\@spaces}{Package graphicx or graphics not loaded}{See the gnuplot documentation for explanation.}{The gnuplot epslatex terminal needs graphicx.sty or graphics.sty.}\renewcommand\includegraphics[2][]{}}\providecommand\rotatebox[2]{#2}\@ifundefined{ifGPcolor}{\newif\ifGPcolor
    \GPcolortrue
  }{}\@ifundefined{ifGPblacktext}{\newif\ifGPblacktext
    \GPblacktexttrue
  }{}\let\gplgaddtomacro\g@addto@macro
\gdef\gplbacktext{}\gdef\gplfronttext{}\makeatother
  \ifGPblacktext
\def\colorrgb#1{}\def\colorgray#1{}\else
\ifGPcolor
      \def\colorrgb#1{\color[rgb]{#1}}\def\colorgray#1{\color[gray]{#1}}\expandafter\def\csname LTw\endcsname{\color{white}}\expandafter\def\csname LTb\endcsname{\color{black}}\expandafter\def\csname LTa\endcsname{\color{black}}\expandafter\def\csname LT0\endcsname{\color[rgb]{1,0,0}}\expandafter\def\csname LT1\endcsname{\color[rgb]{0,1,0}}\expandafter\def\csname LT2\endcsname{\color[rgb]{0,0,1}}\expandafter\def\csname LT3\endcsname{\color[rgb]{1,0,1}}\expandafter\def\csname LT4\endcsname{\color[rgb]{0,1,1}}\expandafter\def\csname LT5\endcsname{\color[rgb]{1,1,0}}\expandafter\def\csname LT6\endcsname{\color[rgb]{0,0,0}}\expandafter\def\csname LT7\endcsname{\color[rgb]{1,0.3,0}}\expandafter\def\csname LT8\endcsname{\color[rgb]{0.5,0.5,0.5}}\else
\def\colorrgb#1{\color{black}}\def\colorgray#1{\color[gray]{#1}}\expandafter\def\csname LTw\endcsname{\color{white}}\expandafter\def\csname LTb\endcsname{\color{black}}\expandafter\def\csname LTa\endcsname{\color{black}}\expandafter\def\csname LT0\endcsname{\color{black}}\expandafter\def\csname LT1\endcsname{\color{black}}\expandafter\def\csname LT2\endcsname{\color{black}}\expandafter\def\csname LT3\endcsname{\color{black}}\expandafter\def\csname LT4\endcsname{\color{black}}\expandafter\def\csname LT5\endcsname{\color{black}}\expandafter\def\csname LT6\endcsname{\color{black}}\expandafter\def\csname LT7\endcsname{\color{black}}\expandafter\def\csname LT8\endcsname{\color{black}}\fi
  \fi
    \setlength{\unitlength}{0.0500bp}\ifx\gptboxheight\undefined \newlength{\gptboxheight}\newlength{\gptboxwidth}\newsavebox{\gptboxtext}\fi \setlength{\fboxrule}{0.5pt}\setlength{\fboxsep}{1pt}\begin{picture}(4520.00,2820.00)\gplgaddtomacro\gplbacktext{\csname LTb\endcsname \put(616,931){\makebox(0,0)[r]{\strut{}$10^{-6}$}}\csname LTb\endcsname \put(616,1490){\makebox(0,0)[r]{\strut{}$10^{-4}$}}\csname LTb\endcsname \put(616,2049){\makebox(0,0)[r]{\strut{}$10^{-2}$}}\csname LTb\endcsname \put(1521,448){\makebox(0,0){\strut{}$500$}}\csname LTb\endcsname \put(3282,448){\makebox(0,0){\strut{}$1500$}}}\gplgaddtomacro\gplfronttext{\csname LTb\endcsname \put(2445,142){\makebox(0,0){\strut{}$h$}}}\gplbacktext
    \put(0,0){\includegraphics[width={226.00bp},height={141.00bp}]{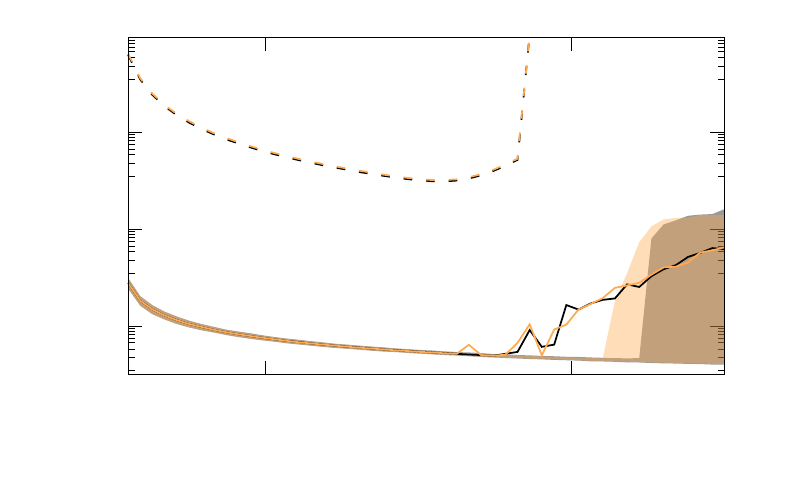}}\gplfronttext
  \end{picture}\endgroup
     \caption{The solid lines are the mean values of the risk functional
      (black) and the cross-validation score (orange). The transparent tubes
      represent 90\% of all outcomes. The dashed lines are our theoretical
      bounds for these regions.}
  \end{subfigure}
  \begin{subfigure}{0.50\linewidth}
    \centering
    \begingroup
  \makeatletter
  \providecommand\color[2][]{\GenericError{(gnuplot) \space\space\space\@spaces}{Package color not loaded in conjunction with
      terminal option `colourtext'}{See the gnuplot documentation for explanation.}{Either use 'blacktext' in gnuplot or load the package
      color.sty in LaTeX.}\renewcommand\color[2][]{}}\providecommand\includegraphics[2][]{\GenericError{(gnuplot) \space\space\space\@spaces}{Package graphicx or graphics not loaded}{See the gnuplot documentation for explanation.}{The gnuplot epslatex terminal needs graphicx.sty or graphics.sty.}\renewcommand\includegraphics[2][]{}}\providecommand\rotatebox[2]{#2}\@ifundefined{ifGPcolor}{\newif\ifGPcolor
    \GPcolortrue
  }{}\@ifundefined{ifGPblacktext}{\newif\ifGPblacktext
    \GPblacktexttrue
  }{}\let\gplgaddtomacro\g@addto@macro
\gdef\gplbacktext{}\gdef\gplfronttext{}\makeatother
  \ifGPblacktext
\def\colorrgb#1{}\def\colorgray#1{}\else
\ifGPcolor
      \def\colorrgb#1{\color[rgb]{#1}}\def\colorgray#1{\color[gray]{#1}}\expandafter\def\csname LTw\endcsname{\color{white}}\expandafter\def\csname LTb\endcsname{\color{black}}\expandafter\def\csname LTa\endcsname{\color{black}}\expandafter\def\csname LT0\endcsname{\color[rgb]{1,0,0}}\expandafter\def\csname LT1\endcsname{\color[rgb]{0,1,0}}\expandafter\def\csname LT2\endcsname{\color[rgb]{0,0,1}}\expandafter\def\csname LT3\endcsname{\color[rgb]{1,0,1}}\expandafter\def\csname LT4\endcsname{\color[rgb]{0,1,1}}\expandafter\def\csname LT5\endcsname{\color[rgb]{1,1,0}}\expandafter\def\csname LT6\endcsname{\color[rgb]{0,0,0}}\expandafter\def\csname LT7\endcsname{\color[rgb]{1,0.3,0}}\expandafter\def\csname LT8\endcsname{\color[rgb]{0.5,0.5,0.5}}\else
\def\colorrgb#1{\color{black}}\def\colorgray#1{\color[gray]{#1}}\expandafter\def\csname LTw\endcsname{\color{white}}\expandafter\def\csname LTb\endcsname{\color{black}}\expandafter\def\csname LTa\endcsname{\color{black}}\expandafter\def\csname LT0\endcsname{\color{black}}\expandafter\def\csname LT1\endcsname{\color{black}}\expandafter\def\csname LT2\endcsname{\color{black}}\expandafter\def\csname LT3\endcsname{\color{black}}\expandafter\def\csname LT4\endcsname{\color{black}}\expandafter\def\csname LT5\endcsname{\color{black}}\expandafter\def\csname LT6\endcsname{\color{black}}\expandafter\def\csname LT7\endcsname{\color{black}}\expandafter\def\csname LT8\endcsname{\color{black}}\fi
  \fi
    \setlength{\unitlength}{0.0500bp}\ifx\gptboxheight\undefined \newlength{\gptboxheight}\newlength{\gptboxwidth}\newsavebox{\gptboxtext}\fi \setlength{\fboxrule}{0.5pt}\setlength{\fboxsep}{1pt}\begin{picture}(4520.00,2820.00)\gplgaddtomacro\gplbacktext{\csname LTb\endcsname \put(616,869){\makebox(0,0)[r]{\strut{}$10^{-8}$}}\csname LTb\endcsname \put(616,1520){\makebox(0,0)[r]{\strut{}$10^{-5}$}}\csname LTb\endcsname \put(616,2171){\makebox(0,0)[r]{\strut{}$10^{-2}$}}\csname LTb\endcsname \put(1521,448){\makebox(0,0){\strut{}$500$}}\csname LTb\endcsname \put(3282,448){\makebox(0,0){\strut{}$1500$}}}\gplgaddtomacro\gplfronttext{\csname LTb\endcsname \put(2445,142){\makebox(0,0){\strut{}$h$}}}\gplbacktext
    \put(0,0){\includegraphics[width={226.00bp},height={141.00bp}]{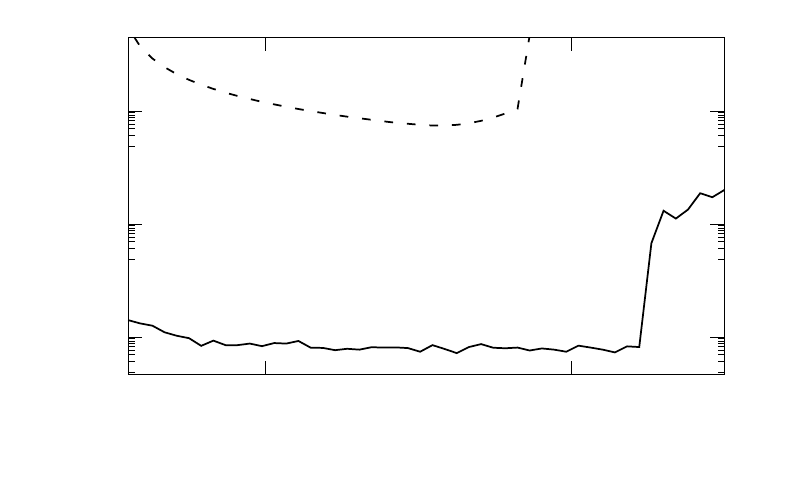}}\gplfronttext
  \end{picture}\endgroup
     \caption{The solid line is the 90\%-quantile of the differences
      between the risk functional and cross-validation score. The dashed line
      is our theoretical bound for this quantity.\vspace{3ex}}
  \end{subfigure}
  \caption{Numerical example on $\mathbb T$}
  \label{fig:bound}
\end{figure}

In Figure~\ref{fig:bound} (c) and (d) our theoretical bounds rise rapidly at $h\approx 1500$ which coincides with the beginning of instability in the computation of Shepard's model.

 \section{Conclusion}

In this paper we presented a framework for obtaining bounds for the difference of cross-validation score and risk functional with high probability.
This speaks for the use of cross-validation in parameter choice questions.
In contrast to most previous results, we obtain a pre-asymptotic statement.

Along the way we proved concentration inequalities for the cross-validation score and risk functional, respectively.
Connecting their expected values, we were able to combine both concentration inequalities and build a machinery to bound their difference with high probability.
All those results are based on uniform bounds of the reconstruction method, which must hold in a subset of all possible samples.
Estimates of this type are broadly available in learning theory.

For demonstration purposes we used Shepard's model on the one-dimensional torus with a rather simple bound of the uniform error.
Numerical examples with a fast implementation support our results.
 
\section*{Acknowledgments}
Felix Bartel acknowledges funding by the European Social Fund (ESF), Project ID 100367298.

\bibliographystyle{abbrv}

\end{document}